\newtheorem{theorem}{Theorem}[section]
\newtheorem{lemma}[theorem]{Lemma}
\newtheorem{proposition}[theorem]{Proposition}
\theoremstyle{remark}
\newtheorem{remark}[theorem]{Remark}
\theoremstyle{definition}
\numberwithin{equation}{section}
\begin{document}
	
	\title[A quantitative symmetry result for $p$-Laplace equations]{A quantitative symmetry result for $p$-Laplace equations with discontinuous nonlinearities}

	\author[G. Ciraolo]{Giulio Ciraolo}
	\address[G. Ciraolo]{Dipartimento di Matematica ``Federigo Enriques"\\
		Universit\`a degli Studi di Milano\\ Via Cesare Saldini 50, 20133 Milan\\
		Italy}
	\email{giulio.ciraolo@unimi.it}
	
	\author[X. Li]{Xiaoliang Li}
	\address[X. Li]{Dipartimento di Matematica ``Federigo Enriques"\\
		Universit\`a degli Studi di Milano\\ Via Cesare Saldini 50, 20133 Milan\\
		Italy}
	\email{xiaoliang.li@unimi.it}
	
	\thanks{}
	\subjclass{35B35, 35B06, 35J92}
	
	\keywords{Quantitative stability; Symmetry; $p$-Laplace equations}
	
	\begin{abstract}
	In this paper, we study positive solutions $u$ of the homogeneous Dirichlet problem for the $p$-Laplace equation $-\Delta_p \,u=f(u)$ in a bounded domain $\Omega\subset\mathbb{R}^N$, where $N\ge 2$, $1<p<+\infty$ and $f$ is a discontinuous function. We address the quantitative stability of a Gidas--Ni--Nirenberg type symmetry result for $u$, which was established by Lions \cite{Lions81} and Serra \cite{Ser13} when $\Omega$ is a ball. By exploiting a quantitative version of the P\'olya--Szeg\"o principle, we prove that the deviation of $u$ from its Schwarz symmetrization can be estimated in terms of the isoperimetric deficit of $\Omega$. 
	\end{abstract}

	\maketitle
	
	\section{Introduction}
	Given a bounded domain $\Omega$ in $\mathbb{R}^N$ with $N\ge 2$, we consider the problem
	\begin{equation}\label{in-eq:pro}
		\begin{cases}
			-\Delta_p \,u=f(u) & \text{in }\Omega\\
			u>0 & \text{in }\Omega\\
			u=0 & \text{on }\partial\Omega,
		\end{cases}
	\end{equation}
	where $\Delta_p \, u=\mathrm{div}\,(|\nabla u|^{p-2}\nabla u)$ denotes the $p$-Laplacian operator, $p>1$, and $f$ is a given function on $[0,+\infty)$. 

	It is well known that solutions to \eqref{in-eq:pro} could be expected to be of spherical symmetry when $\Omega$ is a ball (say $\Omega=B$), under suitable assumptions on the nonlinearity $f$.

	This study was pioneered by Gidas, Ni and Nirenberg in their seminal paper \cite{GNN79}. They proved that, for the Laplacian case $p=2$, every classical solution to \eqref{in-eq:pro} is radially symmetric and decreasing, provided that $\Omega=B$ and $f=f_1+f_2$ with $f_1$ Lipschitz and $f_2$ non-decreasing.

	Subsequently, such a symmetry result has been generalized to the quasi-linear case $p\neq 2$ in various works, such as in \cite{DP98,DP00,DS04} for a Lipschitz function $f$, and in \cite{KP94,Bro97,DFM05} for a  continuous $f$ not necessarily Lipschitz.

Symmetry results have also been developed when $f$ may be discontinuous.
The contribution in this direction was first due to Lions \cite{Lions81}, who showed the radial symmetry and monotonicity of solutions to \eqref{in-eq:pro} under the conditions that $\Omega=B$, $p=N=2$ and $f$ is positive and locally bounded. Serra \cite{Ser13} later extended this result to hold for \eqref{in-eq:pro} with any $p>1$ and $N\ge 2$, though there additional growth assumptions on $f$ are required when $1<p<N$.

	It should be pointed out that the techniques used in the above mentioned papers are different. The proofs in Gidas--Ni--Nirenberg \cite{GNN79} as well as in \cite{DP98,DP00,DS04,DFM05} are based on the method of moving planes (in Dolbeault--Felmer--Monneau \cite{DFM05}, a local version of this method was exploited), while those in Brock \cite{Bro97} carry out the so-called ``continuous Steiner Symmetrization" procedure (see also \cite{Bro00}). Lions' method in \cite{Lions81} combines the isoperimetric inequality and the Pohozaev identity, and this argument was generalized in Kesavan--Pacella \cite{KP94} to high dimensional cases $p=N$ and further in Serra's paper \cite{Ser13} to the case of any $p\neq N$.

    These qualitative studies naturally prompt the stability question of whether a solution to problem \eqref{in-eq:pro} ``almost" remains radially symmetric under small ``perturbations" of certain conditions leading to symmetry, such as the sphericity of the domain $\Omega$. However, few results about this issue are available in literature. To the best of our knowledge, the first effort to address this problem was made by Rosset \cite{Ros94}, who obtained an approximate Gidas--Ni--Nirenberg result illustrating in a quantitative way that a classical solution to \eqref{in-eq:pro} (with $p=2$ and $f$ being locally Lipschitz) is close to a radial function if $\Omega$ is a $C^1$ perturbation of a ball.

	Recently, another quantitative version of the Gidas--Ni--Nirenberg result was established by the first author, Cozzi, Perugini and Pollastro \cite{CCPP24}. For problem \eqref{in-eq:pro} where $\Omega=B$, $p=2$ and $f$ is locally Lipschitz and nonnegative, they derived approximate symmetry and monotonicity of its solutions by considering perturbations of the nonlinearity $f$, as well as smooth perturbations of the domain.

	We mention that the proofs in \cite{Ros94} and \cite{CCPP24} are both based on a quantitative refinement of the method of moving planes for the Laplacian.

	The aim of this paper is to address the stability of symmetry properties for solutions to problem \eqref{in-eq:pro}, also in the quasi-linear case $p\neq 2$, under low regularity assumptions on $f$ and $\Omega$. We investigate a quantitative counterpart of Serra's result in \cite{Ser13} in the spirit of characterizing how closely solutions are from being symmetric when the domain $\Omega$ is nearly a ball in some sense (symboled below by $\Omega\approx B$).

More precisely, as considered in \cite{Ser13}, we assume $p\in(1,+\infty)$, $\Omega$ Lipschitz and $f$ possibly discontinuous, and we establish a quantitative result for problem \eqref{in-eq:pro} by estimating the $L^1$-distance between a solution to \eqref{in-eq:pro} and its \emph{Schwarz symmetrization} in terms of \emph{the isoperimetric deficit} of $\Omega$. 
	
	
	
	While preparing this manuscript, we learned that a similar work was recently done by Dipierro, Silva, Poggesi and Valdinoci \cite{DSPV24}. They also obtained an approximate symmetry result of $L^1$ type for problem \eqref{in-eq:pro} when $p\in(1,+\infty)$ and $\Omega\approx B$. We emphasize that there are relevant differences between \cite{DSPV24} and the present paper, in particular regarding the assumptions on $\Omega$ and $f$, the deficit to measure the proximity $\Omega\approx B$, and the approach of the proofs.

	Indeed, in \cite{DSPV24} problem \eqref{in-eq:pro} was studied by requiring that $\Omega$ is of class $C^2$ and $f$ is a locally Lipschitz function. The deficit in \cite{DSPV24} measuring $\Omega\approx B$ was introduced as the sum of the isoperimetric deficit of $\Omega$ and a quantity concerning integrals over $\partial\Omega$. Regarding the proof in \cite{DSPV24}, it exploits the arguments developed by Cianchi--Esposito--Fusco--Trombetti \cite{CEFT08} in the study of a quantitative P\'olya--Szeg\"o principle, and it also relies on a refinement of the regularity estimates derived in Damascelli--Sciunzi \cite{DS04}.

	Instead, as we will describe later in Subsection \ref{sub-sec:strategy}, our approach takes inspiration from the papers by Figalli--Maggi--Pratelli \cite{FMP2010} and Amato--Barbato--Masiello--Paoli \cite{ABMP23} in the study of sharp quantitative isoperimetric inequalities and of a quantitative Talenti comparison result, respectively. It enables us to deal with the symmetry issue for problem \eqref{in-eq:pro} under quite weak regularity assumptions on the domain and on the nonlinearity.

	To explicitly state our result, we introduce some notations. Given a bounded Lipschitz domain $\Omega\subset\mathbb{R}^N$, let $\delta_\Omega$ be the isoperimetric deficit of $\Omega$ defined in \eqref{pre-eq:iso-def}, and let $C_\Omega$ be the constant appearing in a Poincar\'e trace inequality (see \eqref{pre-eq:Poincare-trace}). Let $u^*$ be the Schwarz symmetrization of a measurable function $u$, as recalled in \eqref{pre-eq:ustar}. Moreover, if $u\in C(\overline{\Omega})$ is a weak solution to \eqref{in-eq:pro} with $f$ locally bounded, we let
$$
M=\max_{\overline{\Omega}} u \,, \quad m_f=\inf_{t\in(0,M]} f(t)
$$ 
and 
$$
M_f \textmd{ be such that } f(t)\le M_f \textmd{ for any } t\in[0,M] \,; 
$$
in this case, we agree to regard $u$ as a function in $\mathbb{R}^N$ by extending it to be $0$ outside $\Omega$.

   Our main result is the following. 
     \begin{theorem}\label{thm:main}
   	Let $\Omega$ be a bounded Lipschitz domain in $\mathbb{R}^N$ with $N\ge 2$, and let $f$ be a nonnegative Borel function on $[0,+\infty)$ that is locally bounded. Let $u\in C^1(\overline{\Omega})$ be a weak solution to \eqref{in-eq:pro}. 
   	
   Assume $m_f>0$ and that one of the following two conditions is fulfilled: 

\begin{equation} \label{pgeN}
p\ge N
\end{equation}
or 
\begin{equation}\label{in-eq:conditions-f}
1<p<N \,\text{  and  }\, \phi\le f\le \frac{Np-s}{N-p}\phi 
\end{equation}
for some nonincreasing function $\phi\ge 0$ and number $s>0$. 
   	 
   	Then there exist a constant $\rho$ depending only on $N$ and $p$, and a constant $C$, depending only on $N,p,|\Omega|,C_\Omega,M,M_f,m_f$ and $\|\nabla u\|_{L^\infty(\partial\Omega)}$ (and also on $s$ if $1<p<N$), such that
   		\begin{equation}\label{in-eq:u-ustar}
   			\inf_{x_0\in\mathbb{R}^N}\int_{\mathbb{R}^N}|u(x)- u^*(x+x_0)|\,dx\le C (\delta_\Omega)^{\rho}.
   		\end{equation}
   	\end{theorem}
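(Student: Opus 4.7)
The plan is to exploit a quantitative Pólya--Szegő principle after controlling the Pólya--Szegő deficit of $u$ by the isoperimetric deficit of $\Omega$. Let $B^*$ denote the open ball centered at the origin with $|B^*|=|\Omega|$, and introduce the deficit
\[
D(u) := \int_\Omega |\nabla u|^p\, dx - \int_{B^*}|\nabla u^*|^p\, dx \ge 0.
\]
Testing $-\Delta_p u = f(u)$ against $u \in W^{1,p}_0(\Omega)$ yields $\int_\Omega|\nabla u|^p\, dx = \int_\Omega u f(u)\, dx$; equimeasurability of $u$ and $u^*$ then gives $\int_\Omega u f(u)\, dx = \int_{B^*} u^* f(u^*)\, dx$. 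Hence
\[
D(u) = \int_{B^*} u^* f(u^*)\, dx - \int_{B^*}|\nabla u^*|^p\, dx,
\]
and $D(u) = 0$ would force $u^*$ to be a weak solution on $B^*$, recovering Serra's qualitative result.

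The first and most delicate step is to prove $D(u) \le C\, \delta_\Omega^\sigma$ for some $\sigma = \sigma(N,p) > 0$. The coarea formula provides the level-set decomposition
\[
D(u) = \int_0^M \left[\int_{\{u=t\}} |\nabla u|^{p-1}\, d\mathcal{H}^{N-1} - \frac{\bigl(N \omega_N^{1/N} \mu(t)^{(N-1)/N}\bigr)^p}{(-\mu'(t))^{p-1}}\right] dt,
\]
where $\mu(t) := |\{u > t\}|$. Each integrand is non-negative (combine Hölder on $\{u=t\}$ with the isoperimetric inequality applied to $\{u>t\}$) and splits into a Hölder-type deficit and an isoperimetric-type deficit for the super-level set. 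To bound the integral by $\delta_\Omega$, I would rewrite it using the Pohozaev identity for $u$, which brings in the boundary integral $\int_{\partial\Omega}|\nabla u|^p(x\cdot\nu)\, d\mathcal{H}^{N-1}$. The difference between this integral and its counterpart on $\partial B^*$ (where $x\cdot\nu \equiv R^*$ is constant) is controlled by $\delta_\Omega^\sigma$ via a Fuglede-type argument combined with the Figalli--Maggi--Pratelli quantitative isoperimetric inequality and the Poincaré trace inequality (whose constant $C_\Omega$ enters the final dependencies). The assumption $m_f > 0$ is essential here: by Hopf's lemma it provides $|\nabla u|$ uniformly bounded below on $\partial\Omega$, giving the required non-degeneracy.

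The second step is a quantitative Pólya--Szegő principle, in the spirit of Cianchi--Esposito--Fusco--Trombetti and of the Talenti-type refinement of Amato--Barbato--Masiello--Paoli:
\[
\inf_{x_0 \in \mathbb{R}^N} \int_{\mathbb{R}^N} |u(x) - u^*(x + x_0)|\, dx \le C\, D(u)^\tau
\]
for some $\tau = \tau(N,p) > 0$. The requisite non-degeneracy (distribution function of $|\nabla u^*|$ without atoms, finite-perimeter level sets) follows from $m_f > 0$, the $C^1$ regularity of $u$, and $\|\nabla u\|_{L^\infty(\partial\Omega)} < \infty$. In the sub-critical regime $1 < p < N$, the growth condition \eqref{in-eq:conditions-f} enters here (mirroring Serra's dichotomy) to keep the quantitative constants uniform. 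Chaining the two steps yields the theorem with $\rho = \sigma\tau$.

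The main obstacle is the first step: although the coarea decomposition makes $D(u)$ structural, the deficit is an intrinsic quantity of $u^*$ on $B^*$, whereas $\delta_\Omega$ is an extrinsic quantity of the domain. Linking them forces one to use the Pohozaev identity to transfer geometric information from $\partial\Omega$, where $\delta_\Omega$ is encoded via a Fuglede-type control on the oscillation of $x\cdot\nu$, into the level-set decomposition of $D(u)$. This is also where the $p$-dependent dichotomy in the hypotheses is expected to originate, since the coefficient $(N-p)/p$ in the Pohozaev identity has different signs in the regimes $p=N$, $p<N$, and $p>N$.
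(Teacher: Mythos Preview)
Your two-step architecture matches the paper's, and Step~1 is close in spirit: the paper also uses the Pohozaev identity together with the Figalli--Maggi--Pratelli Brenier-map argument and the Poincar\'e trace inequality to control a deficit by $\sqrt{\delta_\Omega}$. The main structural difference there is that the paper does not bound the P\'olya--Szeg\"o deficit $D(u)$ directly; it bounds the scalar quantity $\mathfrak{D}$ of \eqref{in-eq:def-D}, which is what naturally appears when you subtract the Pohozaev identity from the H\"older/isoperimetric upper bound for $\bigl(\int_\Omega f(u)\bigr)^{p/(p-1)}$. This is cleaner than trying to push the coarea decomposition of $D(u)$ through Pohozaev. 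Also, a small correction: $m_f>0$ is \emph{not} used in Step~1 via a Hopf-type lower bound on $|\nabla u|$ on $\partial\Omega$; only the upper bound $\|\nabla u\|_{L^\infty(\partial\Omega)}$ enters there.

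The genuine gap is in Step~2. The Cianchi--Esposito--Fusco--Trombetti estimate is \emph{not} of the form $\inf_{x_0}\|u-u^*(\cdot+x_0)\|_{L^1}\le C\,D(u)^\tau$; it contains the additional term
\[
\mathcal{M}_{u^*}(\delta)=\frac{\bigl|\{0<u^*<M,\ |\nabla u^*|\le\delta\}\bigr|}{|\Omega|},
\]
and the bound blows up like $1/\delta$ as $\delta\to 0$. Qualitative non-degeneracy of $|\nabla u^*|$ is not enough: you need a \emph{quantitative} estimate $\mathcal{M}_{u^*}(\delta)\lesssim \delta^{a}+(\text{deficit})^{b}$ so that an optimal choice of $\delta$ closes the argument. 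Since $u^*$ does not solve any PDE, there is no direct way to get this. The paper supplies the missing mechanism by introducing the auxiliary radial problem $-\Delta_p v=f(u^*)$ in $\Omega^*$, proving the Talenti-type comparison $u^*\le v$ with $\|v-u^*\|_{L^\infty}\le C\,\mathfrak{D}^{p/(N(p-1)+p)}$, and then using the explicit lower bound $|\nabla v(x)|\ge (m_f|x|/N)^{1/(p-1)}$ to transfer gradient information from $v$ to $u^*$; this yields both $D(u)\lesssim \|v-u^*\|_{L^\infty}$ and the needed bound on $\mathcal{M}_{u^*}(\delta)$. The positivity $m_f>0$ and, when $1<p<N$, the parameter $s>0$ in \eqref{in-eq:conditions-f} are used precisely here (the latter to get the lower bound \eqref{com-eq:low-ajfbi}), not in Step~1. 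Without this comparison layer your Step~2 does not go through.
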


   \begin{remark}
   	When $\Omega$ is a ball in $\mathbb{R}^N$, we have $\delta_\Omega=0$ and it follows from \eqref{in-eq:u-ustar} that the solution $u$ coincides with $u^*$ up to a translation, thus implying $u$ is radially symmetric and decreasing. This recovers Serra's result obtained in \cite[Theorem 1]{Ser13}. Note that in \cite{Ser13}, the nonlinearity $f$ was considered in a more general form where it assumes \eqref{in-eq:conditions-f} to hold with $s=0$ and does not require $m_f>0$. In the quantitative case we are considering, the assumptions on $f$ are strengthened to overcome technical difficulties. To see this, we refer to Remark \ref{com-rk:C-s-mf} to gain insight into how the constant $C$ in \eqref{in-eq:u-ustar} depends on $m_f$ and $s$ in our arguments.
   \end{remark}
   
   \begin{remark}
   	Estimate \eqref{in-eq:u-ustar} can be boosted in the way of replacing the $L^1$ norm by $L^q$ norm with $q>1$ on the left-hand side. Actually, by exploiting the Gagliardo-Nirenberg inequalities, it is easy to infer from \eqref{in-eq:u-ustar} and \eqref{pf-eq:est-Dustar} that 
   	\begin{equation*}
   		\inf_{x_0\in\mathbb{R}^N}\int_{\mathbb{R}^N}|u(x)- u^*(x+x_0)|^q\,dx\le \tilde{C} (\delta_\Omega)^{\tilde{\rho}},
   	\end{equation*}
  for any $q\in [1, \frac{Np}{N-p})$ if $1<p<N$, any $q\ge 1$ if $p=N$, and  $q=\infty$ if $p>N$. Here, $\tilde{\rho}$ and $\tilde{C}$ are constants analogous to those in \eqref{in-eq:u-ustar}, but in this case, they also depend on $q$. 
   \end{remark}

   


   \subsection{Strategy of the proof and organization of the paper} \label{sub-sec:strategy}
   
   The starting point of the proof of Theorem \ref{thm:main} lies in retracing Serra's arguments in \cite{Ser13} to extract quantitative information on the symmetry of the solution. As already mentioned, the method used in \cite{Ser13} is a substantial extension of that developed in \cite{Lions81,KP94}, which is based on integral identities. To be precise, in the sequel we let $u\in C^1(\overline{\Omega})$ be a weak solution to \eqref{in-eq:pro} and introduce $\mathfrak{D}$ as the quantity
    \begin{equation}\label{in-eq:def-D}
   	\mathfrak{D}=\frac{N(p-1)}{ \kappa_N^{\frac{p}{p-1}}p}|\Omega|^{\frac{p-N}{N(p-1)}}\left[\int_\Omega f(u)\,dx\right]^{\frac{p}{p-1}}-N\int_\Omega F(u)\,dx-\frac{p-N}{p}\int_\Omega uf(u)\,dx,
   \end{equation}
   where $F(t)=\int _0^t f(\lambda)\,d\lambda$ and $\kappa_N$ is as in \eqref{pre-eq:iso}. The proof in \cite{Ser13} consists in two ingredients. One is to show $\mathfrak{D}\ge 0$ via a procedure applying the isoperimetric inequality and H\"older inequality on level sets of $u$, and the other is to exploit a Pohozaev-type identity to get instead $\mathfrak{D}\le 0$ when $\Omega$ is a ball. Thus, it yields $\mathfrak{D}=0$ in the case of balls, which in turn implies that those inequalities used to derive $\mathfrak{D}\ge 0$ are actually equalities. This forces $u$ towards being radially symmetric and decreasing.

   
    Refining the above demonstration, in the present paper we prove Theorem \ref{thm:main} with two steps. First, we shall show that the quantity $\mathfrak{D}$ can be bounded by the isoperimetric deficit of $\Omega$ (i.e. $\delta_\Omega$ above). Then we further demonstrate that, up to translations, the distance between $u$ and $u^*$ in $L^1(\mathbb{R}^N)$ can be estimated in terms of $\mathfrak{D}$. By combining these results we conclude the proof of Theorem \ref{thm:main}.

More precisely, the first step is treated in Section \ref{sec:est} where, in Proposition \ref{est-pro:D-iso},  we derive an explicit bound on $\mathfrak{D}$ in terms of the size of $\sqrt{\delta_\Omega}$. This is achieved by exploiting ideas from \cite{FMP2010} that involve \emph{the Brenier map} from mass transportation theory and the application of Poincar\'e-type trace inequalities.

To accomplish the second step, we develop a comparison result of Talenti type related to problem \eqref{in-eq:pro} and exploit a suitable version of the quantitative P\'olya--Szeg\"o principle proved in \cite{CEFT08} (see Theorem \ref{pre-thm:PZ}). This scheme is motivated by noting that the derivation of inequality $\mathfrak{D}\ge 0$ in \cite{Ser13} actually parallels that of the classical P\'olya--Szeg\"o principle recalled in \cite{CEFT08}, as well as that of Talenti's comparison theorems in \cite{Talenti76,Talenti79}. Indeed, these results are all derived from an application of the isoperimetric inequality and H\"older inequality on the level sets of the considered function and/or its Schwarz symmetrization and, essentially, their extremal cases yield that the applied isoperimetric inequality and H\"older inequality are both equalities (thus suggesting the symmetry of the extremals, as argued in \cite{BZ,Lions81,KP94,Ser13,ALT86}, for instance).

Accordingly, when investigating the asymmetry of $u$ under perturbations of the condition $\mathfrak{D}=0$, we are naturally connected to the quantitative counterpart of P\'olya--Szeg\"o principle studied in \cite{CEFT08} that quantifies the deviation of $u$ from $u^*$ under perturbations of the equality $\int_{\mathbb{R}^N} |\nabla u|^p\,dx=\int_{\mathbb{R}^N} |\nabla u^*|^p\,dx$ instead. Related to this, we mention that the result in \cite{CEFT08} has been recently used in \cite{ABMP23} to establish a quantitative version of the classical comparison result of Talenti \cite{Talenti76} for the Poisson equation.

Inspired by the discussions in \cite{ABMP23}, we derive in Section \ref{sec:com} a comparison result for $u$ in terms of solution to
\begin{equation}\label{in-eq:pro-v}
	\begin{cases}
		-\Delta_p \,v=f(u^*) &\text{in }\Omega^*\\
		v=0&\text{on }\partial\Omega^* ,
	\end{cases}
\end{equation}
where $\Omega^*$ is the open ball centered at the origin such that $|\Omega^*|=|\Omega|$. Indeed, it is proved in Theorem \ref{com-thm:Talenti} that $u^*\le v$ in $\Omega^*$ and that $\|v-u^*\|_{L^\infty(\Omega^*)}$ is bounded by the size of $\mathfrak{D}$. On the other hand, we are able to apply Theorem \ref{pre-thm:PZ} to estimate the $L^1$-distance between $u$ and $u^*$ via $\|v-u^*\|_{L^\infty(\Omega^*)}$, see Proposition \ref{pf-pro:u-ustar}. From these, the second step of our proof is completed.

The rest of the paper is organized as follows. In Section \ref{sec:pre} we provide several auxiliary results that will be used in our arguments. Section \ref{sec:est} is devoted to deriving an estimate on the quantity $\mathfrak{D}$ in terms of the isoperimetric deficit. Then, in Section \ref{sec:com}, we establish a comparison theorem for the symmetrized problem \eqref{in-eq:pro-v}. Finally, we prove Theorem \ref{thm:main} in Section \ref{sec:pf}.

   \section{Preliminaries} \label{sec:pre}
 
   Throughout the paper, we fix $D^c:=\mathbb{R}^N\setminus D$ for any measurable set $D\subset\mathbb{R}^N$, and denote by $\partial D$, $\chi_D$ and $|D|$ its topological boundary, characteristic function and $N$-dimensional Lebesgue measure, respectively. Let $B_r$ be the open ball in $\mathbb{R}^N$ centered at the origin with radius $r$, and let the symbol $\mathcal{H}^{N-1}$ stand for the $(N-1)$-dimensional Hausdorff measure.

   \subsection{The isoperimetric deficit and a Poincar\'e trace inequality}
   
   Hereafter, let $\Omega\subset\mathbb{R}^N$ be a bounded Lipschitz domain. Notice that the perimeter of $\Omega$ is given by $\mathcal{H}^{N-1}(\partial\Omega)$. Then the classical isoperimetric inequality states that 
   \begin{equation}\label{pre-eq:iso}
   	\mathcal{H}^{N-1}(\partial\Omega)\ge \kappa_N|\Omega|^{\frac{N-1}{N}},
   \end{equation}
   where $\kappa_N=N\omega_N^{\frac{1}{N}}$ with $\omega_N=|B_1|$. As introduced in \cite{FMP08,FMP2010} for the study of sharp quantitative isoperimetric inequalities, the isoperimetric deficit of $\Omega$, denoted by $\delta_\Omega$, is defined as
   \begin{equation}\label{pre-eq:iso-def}
   	\delta_\Omega=\frac{\mathcal{H}^{N-1}(\partial\Omega)}{\kappa_N|\Omega|^{\frac{N-1}{N}}}-1.
   \end{equation}
    It is well known that the equality case of \eqref{pre-eq:iso} (i.e. $\delta_\Omega=0$) holds if and only if $\Omega$ is a ball.

   In addition, for such $\Omega$, it is known that the following Poincar\'e trace inequality holds (see, for instance, \cite[Theorem 9.6.4]{Mayza11}):
   \begin{equation}\label{pre-eq:Poincare-trace}
   	\inf_{c\in\mathbb{R}}\|\mathrm{tr} (g)-c\|_{L^1(\partial\Omega)}\le C_\Omega \|Dg\|(\Omega)
   \end{equation}
   for some constant $C_\Omega$ and every $g\in BV(\Omega)$ (the space of functions of bounded variation in $\Omega$); here, $\mathrm{tr}(g)$ denotes the trace of $g$ on $\partial\Omega$ and $\|Dg\|(\Omega)$ stands for the total variation over $\Omega$ of the distributional gradient $Dg$ of $g$. Moreover, the infimum in \eqref{pre-eq:Poincare-trace} can be attained for some $c$, referring to \cite[Formula (1.2)]{CFNT17} and the references therein.

    \subsection{Rearrangements } \label{sub-sec:rearrange}
   
   Let $u$ be a measurable function in $\mathbb{R}^N$. Set 
   \begin{equation} \label{LambdaGamma_def}
   	\Lambda_u^t:=\{x\in\mathbb{R}^N: |u(x)|>t\},\quad \Gamma_u^{\tau}:=\{x\in\mathbb{R}^N: u(x)=\tau\},
   	\end{equation}
   for $t\ge 0$ and $\tau\in\mathbb{R}$; let $J_u(t)$ denote the distribution function of $u$,  i.e. 
   \begin{equation} \label{Ju_def}
  J_u(t)=|\Lambda_u^t| \,.
  \end{equation} 
  Assume that $u$ vanishes at infinity, that is, $J_u(t)$ is finite for all $t>0$, and recall that the \emph{decreasing rearrangement} $u^\#$ of $u$ and the \emph{Schwarz symmetrization} $u^*$ (or the \emph{symmetric decreasing rearrangement}) of $u$ are defined by
   \begin{equation} \label{usharp_def}
   	u^\#(s)=\sup \, \{t\ge0: J_u(t)>s\}\quad\text{for }s\ge 0,\\
   \end{equation}
   and 
   \begin{equation}\label{pre-eq:ustar}
   	u^*(x)=u^\#(\omega_N|x|^N)\quad\text{for }x\in \mathbb{R}^N,
   \end{equation}
   respectively. When $u\in L_{\mathrm{loc}}^1(\mathbb{R}^N)$, we will study the pointwise behavior of $u$ by assuming it to agree with its \emph{precise representative} $\tilde{u}$, which is given by  
   \begin{equation} \label{utilde_def}
   	\tilde{u}(x)=
   	\begin{cases}
   		\lim\limits_{r\to0}\frac{1}{|B_r(x)|}\int_{B_r(x)} u(y) \,dy & \text{if this limit exists}\\
   		0 & \text{otherwise}
   		\end{cases}
   \end{equation}  
   for $x\in\mathbb{R}^N$. In addition, denote by $\mathrm{ess\,sup}\, u$ the essential supremum of $u$, and let 
   \begin{equation} \label{Zu_def}
   Z_u:=\{x\in\mathbb{R}^N:\nabla u=0\},
   \end{equation}
   if $u$ is weakly differentiable.
   
   \begin{lemma} \label{pre-lem:prop-rea}
   	Assume $u\in W^{1,p}(\mathbb{R}^N)$, $1\le p<+\infty$. Then $u^*\in W^{1,p}(\mathbb{R}^N)$. Moreover, 
   	\begin{itemize}
   		\item[(i)] For any $t\in [0,\mathrm{ess\,sup}\, u]$, $u^{\#}(J_u(t))=t$ and it holds 
   		\begin{equation}\label{pre-eq:formula-Ju}
   			J_u(t)=J_{u^*}(t)=|Z_u\cap\Lambda_u^t|+\int_t^{\mathrm{ess\,sup}\, u}\int_{\Gamma_u^\tau}\frac{\chi_{Z_u^c}}{|\nabla u|} \,d\mathcal{H}^{N-1}\, d\tau.
   		\end{equation}
 \item[(ii)]  For almost all $t\in (0,\mathrm{ess\,sup}\, u)$, 
   		\begin{equation}\label{pre-eq:dJu}
   			J'_u(t):=\frac{d}{dt}J_u(t)=-\int_{\Gamma_{u^*}^t}\frac{\chi_{Z_{u^*}^c}}{|\nabla u^*|}\, d\mathcal{H}^{N-1} \le -\int_{\Gamma_u^t}\frac{\chi_{Z_u^c}}{|\nabla u|} \,d\mathcal{H}^{N-1}.
   		\end{equation}
   	
   	\item[(iii)] $J_u(t)$ is absolutely continuous on $(0, \mathrm{ess\,sup}\, u)$ if and only if $$|Z_{u^*}\cap \{x\in\mathbb{R}^N: 0<u^*<\mathrm{ess\,sup}\, u\}|=0.$$
    	\end{itemize}
   	\end{lemma}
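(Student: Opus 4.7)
The plan is to establish the three parts using three classical tools of rearrangement theory: the Fleming--Rishel coarea formula, the isoperimetric inequality, and the Sobolev form of the P\'olya--Szeg\"o principle, which delivers $u^*\in W^{1,p}(\mathbb{R}^N)$ directly and can be cited.

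For (i), the equimeasurability $J_u(t)=J_{u^*}(t)$ is built into the definition of $u^*$ via $u^{\#}$. To obtain the explicit decomposition of $J_u(t)$, I would split $\Lambda_u^t=(\Lambda_u^t\cap Z_u)\cup(\Lambda_u^t\cap Z_u^c)$: the first piece contributes $|Z_u\cap\Lambda_u^t|$, while the Fleming--Rishel coarea formula applied on $Z_u^c$ rewrites the measure of the second piece as $\int_t^{\mathrm{ess\,sup}\,u}\int_{\Gamma_u^\tau}\chi_{Z_u^c}/|\nabla u|\,d\mathcal{H}^{N-1}\,d\tau$. The identity $u^{\#}(J_u(t))=t$ then follows from $u^{\#}$ being the generalized inverse of the right-continuous non-increasing function $J_u$, using that Sobolev regularity prevents $J_u$ from having flat intervals in $[0,\mathrm{ess\,sup}\,u]$ (plateaus of $u$ produce jumps, not flat intervals, of $J_u$), so the sup defining $u^{\#}$ evaluated at $J_u(t)$ returns $t$.

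For (ii), I would apply (i) to $u^*$ and differentiate in $t$; since the superlevel sets of $u^*$ are explicit balls, the critical-set piece $|Z_{u^*}\cap\Lambda_{u^*}^t|$ is tractable and the first identity drops out. The inequality is the heart of the classical P\'olya--Szeg\"o computation: the isoperimetric inequality bounds $\mathcal{H}^{N-1}(\Gamma_{u^*}^t)$ above by the perimeter of $\Lambda_u^t$, which equals $\mathcal{H}^{N-1}(\Gamma_u^t)$ for a.e.\ $t$ by Fleming--Rishel, and Cauchy--Schwarz applied to $|\nabla u|^{1/2}$ paired with $|\nabla u|^{-1/2}$ on $\Gamma_u^t\cap Z_u^c$ inverts this into the required comparison of $\int\chi_{Z^c}/|\nabla(\cdot)|\,d\mathcal{H}^{N-1}$. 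Part (iii) then reduces, via $J_u=J_{u^*}$, to the fact that a non-increasing function is absolutely continuous on an interval exactly when it has no jumps there; jumps of $J_{u^*}$ strictly inside $(0,\mathrm{ess\,sup}\,u)$ correspond bijectively to radial plateaus of $u^{\#}$ in that open range, equivalently to positive-measure subsets of $Z_{u^*}\cap\{0<u^*<\mathrm{ess\,sup}\,u\}$.

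The main obstacle I foresee is the careful bookkeeping between $u$ and $u^*$ across their critical sets. In (ii) this requires a Sard-type control ensuring that $|\nabla u^*|$ is defined $\mathcal{H}^{N-1}$-a.e.\ on $\Gamma_{u^*}^t\cap Z_{u^*}^c$ for a.e.\ $t$, so that the coarea integrand is meaningful; in (iii) it requires excluding a possible plateau of $u$ at its maximum value, which enlarges $Z_{u^*}$ but sits outside the open interval $(0,\mathrm{ess\,sup}\,u)$ and therefore does not violate absolute continuity on that open interval.
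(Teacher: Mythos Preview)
The paper does not actually prove this lemma: it cites Lemmas~2.3 and~2.4 of Brothers--Ziemer and formula~(2.4) of Cianchi--Esposito--Fusco--Trombetti. Your outline for (i) matches the standard argument carried out in those references. There are, however, two genuine gaps in your sketch for (ii) and (iii).

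For the inequality in (ii), the isoperimetric/Cauchy--Schwarz route you propose points in the wrong direction: Cauchy--Schwarz on $\Gamma_u^t\cap Z_u^c$ yields a \emph{lower} bound for $\int_{\Gamma_u^t}\chi_{Z_u^c}/|\nabla u|\,d\mathcal{H}^{N-1}$, whereas what is needed is the \emph{upper} bound $\int_{\Gamma_u^t}\chi_{Z_u^c}/|\nabla u|\,d\mathcal{H}^{N-1}\le -J_u'(t)$. The latter follows immediately by differentiating the decomposition \eqref{pre-eq:formula-Ju} for $u$ itself and using that $t\mapsto|Z_u\cap\Lambda_u^t|$ is non-increasing (hence has non-positive derivative a.e.); no isoperimetric input is required. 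For (iii), your claim that ``a non-increasing function is absolutely continuous on an interval exactly when it has no jumps there'' is false---one minus the Cantor function is continuous, non-increasing, and not absolutely continuous. The correct argument applies \eqref{pre-eq:formula-Ju} to $u^*$, writing $J_u=g+h$ with $h$ the absolutely continuous coarea piece and $g(t)=|Z_{u^*}\cap\Lambda_{u^*}^t|$. The Sard-type fact for Sobolev functions that you mention in passing for (ii) is really needed here: it forces the Stieltjes measure $-dg$ to be concentrated on the Lebesgue-null set $u^*(Z_{u^*})$, so $g'=0$ a.e.; a non-increasing function with a.e.\ vanishing derivative is absolutely continuous if and only if it is constant, and $g$ is constant on $(0,\mathrm{ess\,sup}\,u)$ precisely when $|Z_{u^*}\cap\{0<u^*<\mathrm{ess\,sup}\,u\}|=0$.
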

   
   \begin{proof}
   	 These results are well-known in the literature, referring to Lemmas 2.3 and 2.4 in \cite{BZ} and \cite[Formula (2.4)]{CEFT08}, for instance.
   	\end{proof}
   
   
   \subsection{Quantitative P\'olya--Szeg\"o principle}
   As is well known, a classical principle going back to P\'olya--Szeg\"o \cite{PS51} states that 
   $$
   \int_{\mathbb{R}^N} |\nabla u|^p\,dx\ge \int_{\mathbb{R}^N} |\nabla u^*|^p\,dx
   $$
   for any $u\in W^{1,p}(\mathbb{R}^N)$, $1\le p<+\infty$. Concerning its quantitative counterpart, we have the following result obtained in \cite{CEFT08}. 
   
   \begin{theorem}\label{pre-thm:PZ}
   	Let $N\ge 2$ and $p>1$. There exist positive constants $r_1$, $r_2$, $r_3$ and $C$, depending only on $p$ and $N$, such that, for any $\delta>0$, $0<r\le\min\{r_1,r_2\}$ and $u\in W^{1,p}(\mathbb{R}^N)$ satisfying $J_u(0)<+\infty$, it holds
   	\begin{align}
   		&\min_{\pm}\inf_{x_0\in\mathbb{R}^N}\int_{\mathbb{R}^N}|u(x)\pm u^*(x+x_0)|\,dx \notag\\
   		\le &\, C\|\nabla u^*\|_{L^p(\mathbb{R}^N)}(J_u(0))^{1+\frac1N-\frac1p}\left[\mathcal{M}_{u^*}(\delta)+(\mathcal{E}_u)^r\left(1+\frac{\|\nabla u^*\|_{L^p(\mathbb{R}^N)}}{ (J_u(0))^{\frac1p}\delta}\right)\right]^{r_3}, \label{pre-eq:quant-PS}
   	\end{align}
   	with 
   	\begin{gather}
   		\mathcal{E}(u)=\frac{\int_{\mathbb{R}^N} |\nabla u|^p\,dx}{\int_{\mathbb{R}^N} |\nabla u^*|^p\,dx}-1, \label{pre-eq:def-E}\\
   	\mathcal{M}_{u^*}(\delta)=\frac{\left|\{x\in \mathbb{R}^N:|\nabla u^*(x)|\le\delta,\ 0<u^*(x)<\mathrm{ess\, sup}\, u\}\right|}{J_u(0)}\,, \label{pre-eq:def-M}
   		\end{gather}
and where $J_u$ and $u^*$ are given by \eqref{Ju_def} and \eqref{pre-eq:ustar}, respectively.
   \end{theorem}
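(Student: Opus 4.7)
The plan is to reduce the problem to a quantitative statement on individual level sets, via the coarea formula, and then reassemble the level-set information back into an $L^1$-bound through the layer-cake representation $\|u-u^*(\cdot+x_0)\|_{L^1}=\int_0^{\mathrm{ess\,sup}\,u}|\Lambda_u^t\triangle(\Lambda_{u^*}^t-x_0)|\,dt$. Concretely, I first write out the Pólya–Szegő deficit in the sharp ``level-by-level'' form: using the coarea formula together with Hölder's inequality on each level set (which gives $\int_{\Gamma_u^t}|\nabla u|^{p-1}\,d\mathcal{H}^{N-1}\ge P(\Lambda_u^t)^p/(-J_u'(t))^{p-1}$) and then the isoperimetric inequality $P(\Lambda_u^t)\ge\kappa_N J_u(t)^{(N-1)/N}$, one obtains
\begin{equation*}
\int_{\mathbb{R}^N}|\nabla u|^p-\int_{\mathbb{R}^N}|\nabla u^*|^p\;=\;\int_0^{\mathrm{ess\,sup}\,u}\bigl[A(t)+B(t)\bigr]\,dt\,,
\end{equation*}
where $A(t)\ge 0$ measures the Hölder gap on the level $\Gamma_u^t$ and $B(t)\ge 0$ measures the isoperimetric gap of $\Lambda_u^t$, since the same chain of inequalities becomes an equality for $u^*$ (balls saturate the isoperimetric inequality and $|\nabla u^*|$ is constant on each level set, modulo the set where $J_u$ has jumps).

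Next, I would bring in the sharp quantitative isoperimetric inequality of Fusco–Maggi–Pratelli, $\delta(\Lambda_u^t)\ge c_N\,\alpha(\Lambda_u^t)^2$, to convert the integrated smallness of $B(t)$ (hence of $\mathcal{E}(u)$) into integrated smallness of the Fraenkel asymmetries $\alpha(\Lambda_u^t)$. The key identity $|\nabla u^*|$ is constant on $\Gamma_{u^*}^t=\partial B_{r(t)}$ lets me convert perimeter deficits into volumetric level-set deficits with a weight $|\nabla u^*|_{|\Gamma_{u^*}^t}$; this weight is precisely where the parameter $\delta$ enters. I would then split the range of $t$ into
\begin{equation*}
G_\delta=\{t:|\nabla u^*|_{\Gamma_{u^*}^t}>\delta\}\quad\text{and}\quad B_\delta=\{t:|\nabla u^*|_{\Gamma_{u^*}^t}\le\delta\}\,.
\end{equation*}
On $G_\delta$, the weighted quantitative isoperimetric bound gives $\int_{G_\delta}\alpha(\Lambda_u^t)^2 r(t)^{\text{power}}\,dt \lesssim \mathcal{E}(u)\bigl(1+\|\nabla u^*\|_{L^p}/((J_u(0))^{1/p}\delta)\bigr)$ after an application of Hölder; on $B_\delta$, the level sets contribute directly to $\mathcal{M}_{u^*}(\delta)$ through the definition \eqref{pre-eq:def-M} and the coarea identity relating measures of flat regions in the domain to flat regions in $t$.

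For the final step I need to upgrade the integrated Fraenkel-asymmetry estimate to a single translation $x_0$ working for (almost) all levels simultaneously. For each $t\in G_\delta$ pick a ball $B_{r(t)}(x_t)$ achieving the asymmetry; the difficulty is that different levels may have different optimal centers. Following the approach of Figalli–Maggi–Pratelli alluded to in Section~\ref{sub-sec:strategy}, I would control the oscillation of $t\mapsto x_t$ by comparing nearby levels (nested level sets cannot have wildly different optimal centers without violating the integrated asymmetry bound), and then choose $x_0$ as a weighted average of the $x_t$'s, paying a polynomial loss in the exponent (this is the source of the small exponent $r_3$). Feeding the resulting bound $|\Lambda_u^t\triangle (B_{r(t)}(-x_0))|\le(\alpha(\Lambda_u^t)+\text{osc}(x))|\Lambda_u^t|$ into the layer-cake formula, and taking the minimum over $\pm u^*$ to account for the sign ambiguity (since $u\in W^{1,p}(\mathbb{R}^N)$, not required to be nonnegative), yields \eqref{pre-eq:quant-PS}.

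The main obstacle is unquestionably the ``rigidity of centers'' step: translating the level-by-level asymmetry control into a single global translation. Both the presence of nearly flat levels (handled by the $\mathcal{M}_{u^*}(\delta)$ term) and the small exponents $r_1,r_2,r_3$ in the final bound are essentially artifacts of this step, since one has to absorb the loss coming from a Chebyshev-type argument used to pick a level where the asymmetry is uniformly small together with a slightly suboptimal center. A secondary technical hurdle is that $J_u$ need not be absolutely continuous (the ``plateaus'' of $u^*$): one must carefully use Lemma~\ref{pre-lem:prop-rea}(ii)--(iii) to justify the coarea splitting, with the contribution from jumps of $J_u$ being the precise reason $\mathcal{M}_{u^*}(\delta)$ appears linearly rather than inside the exponent.
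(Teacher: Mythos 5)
The paper's proof of Theorem~\ref{pre-thm:PZ} is essentially a two-line reduction: it cites \cite[Theorem~4.1]{CEFT08}, which already gives the estimate with two separate powers $(\mathcal{E}_u)^{r_1}$ and $(\mathcal{E}_u)^{r_2}$ in the bracket, and then observes that when $\mathcal{E}_u\le 1$ both powers are dominated by $(\mathcal{E}_u)^r$ for any $0<r\le\min\{r_1,r_2\}$, while when $\mathcal{E}_u>1$ the claimed bound is trivially true because the left-hand side is at most $4\omega_N^{-1/N}\|\nabla u^*\|_{L^p(\mathbb{R}^N)}(J_u(0))^{1+1/N-1/p}$ (by the Poincar\'e and H\"older inequalities) and the bracketed quantity on the right then exceeds $1$. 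Your proposal takes a completely different route: it attempts to reprove the quantitative P\'olya--Szeg\"o principle of \cite{CEFT08} from scratch via a coarea decomposition of the gradient deficit into per-level H\"older and isoperimetric gaps, the sharp quantitative isoperimetric inequality, a good/bad split of levels according to $|\nabla u^*|$ versus $\delta$, and a recentering argument. That is a vastly more ambitious path than the one the paper actually follows, and it misses the key observation that the required modification of the cited statement is purely algebraic.

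Beyond being the harder path, your sketch has a genuine gap precisely where you flag the main obstacle: the rigidity-of-centers step is named but not resolved. Passing from an integrated control of the Fraenkel asymmetries $\alpha(\Lambda_u^t)$ to a single translation $x_0$ valid for (almost) all levels simultaneously is the core difficulty of the full argument in \cite{CEFT08}, and ``controlling the oscillation of $t\mapsto x_t$'' or ``taking a weighted average of the centers'' is an idea, not a proof; as written there is no mechanism that prevents the optimal centers from drifting across a chain of levels with individually small asymmetry, which is exactly what a rigorous argument must exclude. Similarly, the assertion that the weighted H\"older estimate on $G_\delta$ produces the precise prefactor $1+\|\nabla u^*\|_{L^p}/((J_u(0))^{1/p}\delta)$ is not derived; in the paper that factor appears simply by regrouping two already-established terms in \cite{CEFT08}'s inequality, not from a fresh estimate. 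So while your outline is a plausible description of how a quantitative P\'olya--Szeg\"o theorem might be proved, it does not by itself establish Theorem~\ref{pre-thm:PZ}; the intended proof is the short reduction to \cite{CEFT08}.
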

   
   \begin{proof}
   	This is actually a modified version of \cite[Theorem 4.1]{CEFT08} which states that
   	\begin{align*}
   		&\min_{\pm}\inf_{x_0\in\mathbb{R}^N}\int_{\mathbb{R}^N}|u(x)\pm u^*(x+x_0)|\,dx\\
   		\le &\, C\|\nabla u^*\|_{L^p(\mathbb{R}^N)}(J_u(0))^{1+\frac1N-\frac1p}\left[\mathcal{M}_{u^*}(\delta)+(\mathcal{E}_u)^{r_1} +\frac{\|\nabla u^*\|_{L^p(\mathbb{R}^N)}}{ (J_u(0))^{\frac1p}\delta}(\mathcal{E}_u)^{r_2} \right]^{r_3}
   	\end{align*}
   	for some positive constants $r_1$, $r_2$, $r_3$ and $C$, depending only on $p$ and $N$. In view of this, it is clear that \eqref{pre-eq:quant-PS} is valid when $\mathcal{E}_u\le 1$, since $$(\mathcal{E}_u)^{r}\ge \max\{(\mathcal{E}_u)^{r_1} ,(\mathcal{E}_u)^{r_2} \} \quad\text{for any }0<r\le\min\{r_1,r_2\}.$$ 
   	
   	On the other hand, it follows from the Poincar\'e inequality and H\"older inequality that 
   	\begin{align*}
   		\min_{\pm}\inf_{x_0\in\mathbb{R}^N}\int_{\mathbb{R}^N}|u(x)\pm u^*(x+x_0)|\,dx \le &\, 2\|u^*\|_{L^1(\mathbb{R}^N)}\\
   		\le &\, 2\|u^*\|_{L^p(\mathbb{R}^N)}(J_u(0))^{1-\frac1p}\\
   		\le &\, 4\omega_N^{-\frac1N}\|\nabla u^*\|_{L^p(\mathbb{R}^N)}(J_u(0))^{1+\frac1N-\frac1p}.
   	\end{align*}
   	Hence, we see that \eqref{pre-eq:quant-PS} still holds true if $\mathcal{E}_u>1$, since
   	$$\mathcal{M}_{u^*}(\delta)+(\mathcal{E}_u)^r\left(1+\frac{\|\nabla u^*\|_{L^p(\mathbb{R}^N)}}{ (J_u(0))^{\frac1p}\delta}\right)>1$$
   	for any $r>0$. 
   	
   	The proof is complete.
   \end{proof}

   \section{Estimate on the quantity $\mathfrak{D}$ via the isoperimetric deficit} \label{sec:est}
   
   In this section, we prove the following proposition. Here and in the sequel, we adopt the notations introduced in the Introduction and Section \ref{sec:pre}. 
   
	\begin{proposition}\label{est-pro:D-iso}
			Let $p>1$ and let $\Omega$, $f$ and $u$ be as in Theorem \ref{thm:main}. Let $\mathfrak{D}$ and $\delta_\Omega$ be given by \eqref{in-eq:def-D} and \eqref{pre-eq:iso-def}, respectively.
			
			 If $\delta_\Omega\le 1$, then
			\begin{equation}\label{est-eq:D-iso}
				\mathfrak{D} \le \|\nabla u\|_{L^\infty(\partial\Omega)}^p\left(2^{\frac{1}{p-1}}N+9N^3C_\Omega\right)|\Omega|\sqrt{\delta_\Omega}\,,
			\end{equation}
			where $C_\Omega$ is as in \eqref{pre-eq:Poincare-trace}.
			\end{proposition}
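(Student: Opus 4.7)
The plan is to retrace Serra's derivation of $\mathfrak{D}\le 0$ on balls, but to track each step's deviation from equality quantitatively. First, I would recast $\mathfrak{D}$ as a purely boundary quantity. Multiplying $-\Delta_p u = f(u)$ by $x\cdot\nabla u$ and integrating by parts---using $u\equiv 0$ and $\nabla u = (\partial_\nu u)\,\nu$ on $\partial\Omega$, together with $F(u)|_{\partial\Omega}=0$---produces the Pohozaev-type identity
\[
\tfrac{p-1}{p}\int_{\partial\Omega}|\nabla u|^p(x\cdot\nu)\,d\mathcal{H}^{N-1}=N\int_\Omega F(u)\,dx+\tfrac{p-N}{p}\int_\Omega uf(u)\,dx.
\]
Combined with $\int_\Omega f(u)\,dx=\int_{\partial\Omega}|\nabla u|^{p-1}\,d\mathcal{H}^{N-1}$ (from testing the equation with the constant $1$), the definition \eqref{in-eq:def-D} rewrites as
\[
\tfrac{p}{p-1}\mathfrak{D}=\tfrac{N|\Omega|^{(p-N)/(N(p-1))}}{\kappa_N^{p/(p-1)}}\Bigl[\int_{\partial\Omega}|\nabla u|^{p-1}\,d\mathcal{H}^{N-1}\Bigr]^{p/(p-1)}-\int_{\partial\Omega}|\nabla u|^p(x\cdot\nu)\,d\mathcal{H}^{N-1}.
\]

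Next, I would apply H\"older's inequality on $\partial\Omega$ to dominate the first integral by $\mathcal{H}^{N-1}(\partial\Omega)^{1/(p-1)}\int_{\partial\Omega}|\nabla u|^p$, and use \eqref{pre-eq:iso-def} to convert the leading coefficient into $R(1+\delta_\Omega)^{1/(p-1)}$ with $R=(|\Omega|/\omega_N)^{1/N}$. The elementary bound $(1+\delta_\Omega)^{1/(p-1)}-1\le 2^{1/(p-1)}\delta_\Omega$ (valid for $\delta_\Omega\le 1$) isolates an error proportional to $\delta_\Omega\le\sqrt{\delta_\Omega}$, which, after using $R\,\mathcal{H}^{N-1}(\partial\Omega)\le 2N|\Omega|$, already accounts for the $2^{1/(p-1)}N$-contribution in \eqref{est-eq:D-iso}. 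The task is thereby reduced to estimating $\int_{\partial\Omega}|\nabla u|^p(R-x\cdot\nu)\,d\mathcal{H}^{N-1}$.

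To handle this, I would, following \cite{FMP2010}, introduce the Brenier transport map $T\colon\Omega\to\overline{B_R}$ pushing the Lebesgue measure on $\Omega$ onto the Lebesgue measure on $B_R$. Writing $T=\nabla\phi$ for a convex potential, we have $|T|\le R$, $\det\nabla T=1$ a.e., and by AM--GM applied to the eigenvalues of the symmetric positive semidefinite matrix $\nabla T$, $\mathrm{div}\,T\ge N$ almost everywhere. Splitting $R-x\cdot\nu=(R-T\cdot\nu)+(T-x)\cdot\nu$ and using the divergence theorem yields the key identity
\[
\int_{\partial\Omega}(R-T\cdot\nu)\,d\mathcal{H}^{N-1}+\int_{\Omega}(\mathrm{div}\,T-N)\,dx=R\,\mathcal{H}^{N-1}(\partial\Omega)-N|\Omega|=N|\Omega|\delta_\Omega,
\]
with both summands nonnegative. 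Since $R-T\cdot\nu\ge 0$ pointwise on $\partial\Omega$, the first summand is directly handled:
$\int_{\partial\Omega}|\nabla u|^p(R-T\cdot\nu)\,d\mathcal{H}^{N-1}\le\|\nabla u\|_{L^\infty(\partial\Omega)}^p N|\Omega|\delta_\Omega$.

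The crux is the term $\int_{\partial\Omega}|\nabla u|^p(T-x)\cdot\nu\,d\mathcal{H}^{N-1}$. I would exploit the translation invariance $\int_{\partial\Omega}|\nabla u|^p\,\nu\,d\mathcal{H}^{N-1}=0$ (immediate from comparing the Pohozaev identity centered at two different origins) to replace $T-x$ by $T-x-c$ for an arbitrary $c\in\mathbb{R}^N$, reducing the problem to bounding
\[
\|\nabla u\|_{L^\infty(\partial\Omega)}^p\inf_{c\in\mathbb{R}^N}\|T-x-c\|_{L^1(\partial\Omega)}.
\]
Applying the Poincar\'e-trace inequality \eqref{pre-eq:Poincare-trace} componentwise controls the infimum by $C_\Omega\|D(T-x)\|(\Omega)=C_\Omega\int_\Omega|\nabla T-I|\,dx$, and a Cauchy--Schwarz step upgrades it to $C_\Omega|\Omega|^{1/2}\|\nabla T-I\|_{L^2(\Omega)}$. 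The decisive final step---and, in my view, the main obstacle---is a \emph{quantitative AM--GM inequality} for the Brenier Jacobian: for a symmetric positive semidefinite matrix $M$ with $\det M=1$, one needs an estimate of the type $\mathrm{tr}\,M-N\ge c(N)|M-I|^2$ (valid in an appropriately integrated sense), which, combined with $\int_\Omega(\mathrm{div}\,T-N)\,dx\le N|\Omega|\delta_\Omega$, yields $\|\nabla T-I\|_{L^2(\Omega)}^2\le c'(N)|\Omega|\delta_\Omega$. Tracking dimensional constants through this chain should produce the $9N^3C_\Omega|\Omega|\sqrt{\delta_\Omega}$-piece of \eqref{est-eq:D-iso}, with the square root appearing precisely at this Cauchy--Schwarz step.
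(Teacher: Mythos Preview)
Your approach is essentially the paper's: Pohozaev identity plus boundary H\"older to reduce $\mathfrak{D}$ to a boundary integral, then the Brenier map $T$ from \cite{FMP2010} together with the Poincar\'e trace inequality \eqref{pre-eq:Poincare-trace} (the paper exploits the freedom in the center $y$ of the Pohozaev identity exactly as you do via the vanishing of $\int_{\partial\Omega}|\nabla u|^p\nu$). The one substantive difference is the endgame: the paper simply invokes \cite[Corollary~2.4]{FMP2010} to get $\|D(T-\mathrm{Id})\|(\Omega)\le 9N^2|\Omega|\sqrt{\delta_\Omega}$ directly in total variation, whereas you propose to rederive this via Cauchy--Schwarz and a quantitative AM--GM bound $\mathrm{tr}\,M-N\gtrsim |M-I|^2$. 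Two cautions if you pursue that route: first, your identification $\|D(T-x)\|(\Omega)=\int_\Omega|\nabla T-I|\,dx$ tacitly assumes $T\in W^{1,1}$, but the Brenier map is in general only $BV$ and $DT$ may carry a singular part that must be handled separately; second, the pointwise quantitative AM--GM fails when the eigenvalues of $M$ are far from $1$, so a truncation argument is needed. Both issues are precisely what \cite[Corollary~2.4]{FMP2010} resolves, so citing it---as the paper does---is the cleanest path to the stated constant.
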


	\begin{proof}
By the divergence theorem (see for instance \cite[Lemma 4.3]{CL22}) and H\"older inequality, we have
\begin{align*}
	\int_\Omega f(u)\,dx&=\int_{\partial\Omega}|\nabla u|^{p-1}\,d\mathcal{H}^{N-1}\\
	&\le[\mathcal{H}^{N-1}(\partial\Omega)]^{\frac{1}{p}}\left(\int_{\partial\Omega}|\nabla u|^p\,d\mathcal{H}^{N-1}\right)^{\frac{p-1}{p}}\\
	&=\left[N(\omega_N)^\frac{1}{N}|\Omega|^{\frac{N-1}{N}}(1+\delta_\Omega)\right]^{\frac{1}{p}}\left(\int_{\partial\Omega}|\nabla u|^p\,d\mathcal{H}^{N-1}\right)^{\frac{p-1}{p}}.
\end{align*}
Then
\begin{align}
	& \frac{N(p-1)}{p}(\kappa_N)^{-\frac{p}{p-1}} |\Omega|^{\frac{p-N}{N(p-1)}}\left[\int_\Omega f(u)\,dx\right]^{\frac{p}{p-1}} \notag\\
	\le&\, \frac{p-1}{p} \int_{\partial\Omega}|\nabla u|^p\left[\left(\frac{|\Omega|}{\omega_N}\right)^{\frac{1}{N}}(1+\delta_\Omega)^{\frac{1}{p-1}}-\left<x-y,\nu\right>\right] d\mathcal{H}^{N-1}(x) \notag\\
	&+\frac{p-1}{p}\int_{\partial\Omega}|\nabla u|^p\left<x-y,\nu\right>d\mathcal{H}^{N-1}(x), \label{est-eq:integral-U}
\end{align}
for any given $y\in\mathbb{R}^N$. On the other hand, notice that the following Pohozaev identity holds true (see, for instance, \cite{GV89} or \cite[Lemma 4.2]{CL24}):
\begin{equation}\label{est-eq:Pohozaev}
	N\int_\Omega F(u)\,dx+\frac{p-N}{p}\int_{\Omega}uf(u)\,dx= \frac{p-1}{p}\int_{\partial\Omega}|\nabla u|^p\left<x-y,\nu\right>d\mathcal{H}^{N-1}(x),
\end{equation} 
where $\nu$ is the unit outer normal to $\Omega$. Thus, we conclude from \eqref{est-eq:integral-U} and \eqref{est-eq:Pohozaev} that
		\begin{align}
			\mathfrak{D}&\le \frac{p-1}{p} \int_{\partial\Omega}|\nabla u|^p\left[\left(\frac{|\Omega|}{\omega_N}\right)^{\frac{1}{N}}(1+\delta_\Omega)^{\frac{1}{p-1}}-\left<x-y,\nu\right>\right] d\mathcal{H}^{N-1}(x)\notag\\
			&\le  \frac{p-1}{p} \|\nabla u\|_{L^\infty(\partial\Omega)}^p\int_{\partial\Omega}\left|\left(\frac{|\Omega|}{\omega_N}\right)^{\frac{1}{N}}(1+\delta_\Omega)^{\frac{1}{p-1}}-\left<x-y,\nu\right>\right| d\mathcal{H}^{N-1}(x).\label{est-eq:upper}
		\end{align}
	
	Let $T$ be the Brenier map between $\Omega$ and $\left(\frac{|\Omega|}{\omega_N}\right)^{\frac{1}{N}}B_1$ such that $T=\nabla\varphi$ for some convex function $\varphi$ on $\mathbb{R}^N$ and $T\in BV(\mathbb{R}^N;\left(\frac{|\Omega|}{\omega_N}\right)^{\frac{1}{N}}B_1)$ (the existence of such a map can be guaranteed by the Brenier--McCann theorem, as in \cite[Proof of Theorem 2.3]{FMP2010}). We have
	\begin{align}
		&\int_{\partial\Omega}\left|\left(\frac{|\Omega|}{\omega_N}\right)^{\frac{1}{N}}(1+\delta_\Omega)^{\frac{1}{p-1}}-\left<x-y,\nu\right>\right| d\mathcal{H}^{N-1}(x)\notag\\
		\le \,& \int_{\partial\Omega}\left|\left(\frac{|\Omega|}{\omega_N}\right)^{\frac{1}{N}}(1+\delta_\Omega)^{\frac{1}{p-1}}-\left<\mathrm{tr}(T)(x),\nu\right>\right| d\mathcal{H}^{N-1}(x)\notag\\
		&+\int_{\partial\Omega}\left|\mathrm{tr}(T)(x)-x+y\right| d\mathcal{H}^{N-1}(x),\label{est-eq:upper-T}
		\end{align}
	where $\mathrm{tr}(T)$ is the trace of $T$ on $\partial\Omega$.

	As argued in \cite{FMP2010} for formula (2.27) therein (see also \cite[formulas (2.10) and (2.11)]{FI13}), one can find by using the divergence theorem of a form presented in \cite[Formula (2.18)]{FMP2010} that
	$$\int_{\partial\Omega}\left<\mathrm{tr}(T),\nu\right>d\mathcal{H}^{N-1}\ge N|\Omega|.$$
	Also, notice that $|\mathrm{tr}(T)(x)|\le \left(\frac{|\Omega|}{\omega_N}\right)^{\frac{1}{N}}$ for $\mathcal{H}^{N-1}$-a.e. $x\in\partial\Omega$. Thus, we deduce that
	\begin{equation}\label{est-eq:upper-T-1}
		\int_{\partial\Omega}\left|\left(\frac{|\Omega|}{\omega_N}\right)^{\frac{1}{N}}(1+\delta_\Omega)^{\frac{1}{p-1}}-\left<\mathrm{tr}(T)(x),\nu\right>\right| d\mathcal{H}^{N-1}\le N|\Omega|\left[(1+\delta_\Omega)^{\frac{p}{p-1}}-1\right].
	\end{equation}

	Furthermore, by virtue of the Poincar\'e trace inequality \eqref{pre-eq:Poincare-trace} we infer that there is a vector $\mathfrak{c}\in\mathbb{R}^N$ such that
	\begin{equation}\label{est-eq:T-boundary}
		\int_{\partial\Omega}\left|\mathrm{tr}(T)(x)-x+\mathfrak{c}\right| d\mathcal{H}^{N-1}(x)\le NC_\Omega \|DS\|(\Omega)
	\end{equation}
	where $S=T-\mathrm{Id}$ ($\mathrm{Id}$ denotes the identity map). On the other hand, by \cite[Corollary 2.4]{FMP2010} we obtain that
	\begin{equation}\label{est-eq:DS}
			\|DS\|(\Omega)\le 9N^2|\Omega|\sqrt{\delta_\Omega}\, ,
	\end{equation}
	provided $\delta_\Omega\le 1$. 
	
	Hence, by taking $y=\mathfrak{c}$ in \eqref{est-eq:upper-T} and exploiting \eqref{est-eq:upper-T-1}--\eqref{est-eq:DS}, we arrive at 
	\begin{align*}
		&\int_{\partial\Omega}\left|\left(\frac{|\Omega|}{\omega_N}\right)^{\frac{1}{N}}(1+\delta_\Omega)^{\frac{1}{p-1}}-\left<x-a,\nu\right>\right| d\mathcal{H}^{N-1}(x)\\
		&\le \frac{p\,2^{\frac{1}{p-1}}}{p-1}N|\Omega|\delta_\Omega+9N^3C_\Omega|\Omega| \sqrt{\delta_\Omega}\\
		&\le \frac{p}{p-1}(2^{\frac{1}{p-1}}N+9N^3C_\Omega)|\Omega| \sqrt{\delta_\Omega},
	\end{align*}
	provided $\delta_\Omega\le 1$. This, together with \eqref{est-eq:upper}, proves \eqref{est-eq:D-iso}. The proof is complete.
	\end{proof}
	
The following lemma was actually shown in the proof of \cite[Theorem 1]{Ser13} (see formula (2.14) therein). Here we recall it and present its proof since the corresponding arguments will be exploited to prove the results in the forthcoming sections. 
	
	\begin{lemma}\label{est-lem:positivity-D}
	Let $\Omega$, $f$ and $u$ be as in Theorem \ref{thm:main}. Assume either $p\ge N$ or that
	\begin{equation}\label{est-eq:condition-f}
		1<p<N \,\text{  and  }\, \phi\le f\le \frac{Np}{N-p}\phi
	\end{equation}
	for some nonincreasing function $\phi\ge 0$. Then $\mathfrak{D}\ge 0$. 
	\end{lemma}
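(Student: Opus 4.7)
My plan is to follow Serra's scheme from \cite{Ser13}, combining the divergence theorem, the coarea formula, and the isoperimetric plus H\"older inequalities on the level sets of $u$.

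First, since $u$ is a $C^1(\overline{\Omega})$ solution of $-\Delta_p u = f(u)$, integrating the equation over $\{u > t\}$ for a.e.\ $t \in (0,M)$ and applying the divergence theorem yields
\begin{equation*}
G(t) := \int_{\{u > t\}} f(u)\,dx = \int_{\{u=t\}} |\nabla u|^{p-1}\,d\mathcal{H}^{N-1},
\end{equation*}
while the coarea formula gives $-\mu'(t) = \int_{\{u=t\}} |\nabla u|^{-1}\,d\mathcal{H}^{N-1}$, with $\mu(t) = |\{u > t\}|$. H\"older's inequality (with conjugate exponents $p$ and $p/(p-1)$ applied to $1 = |\nabla u|^{(p-1)/p}|\nabla u|^{-(p-1)/p}$) produces $\mathcal{H}^{N-1}(\{u=t\})^p \le G(t)(-\mu'(t))^{p-1}$, and the isoperimetric inequality gives $\mathcal{H}^{N-1}(\{u=t\}) \ge \kappa_N \mu(t)^{(N-1)/N}$. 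Combining these and using $-G'(t)=f(t)(-\mu'(t))$ yields, for a.e.\ $t$, $G(t)^{1/(p-1)}(-G'(t)) \ge \kappa_N^{p/(p-1)} f(t)\mu(t)^\beta$ with $\beta := p(N-1)/[N(p-1)]$. Integrating over $[0,M]$ gives the central inequality
\begin{equation*}
\frac{p-1}{p}\Bigl(\int_\Omega f(u)\,dx\Bigr)^{p/(p-1)} \ge \kappa_N^{p/(p-1)}\int_0^M f(t)\mu(t)^\beta\,dt. \qquad (\star)
\end{equation*}

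Second, multiplying $(\star)$ by $\frac{N|\Omega|^\alpha}{\kappa_N^{p/(p-1)}}\cdot\frac{p-1}{p}\cdot(\frac{p-1}{p})^{-1}$ with $\alpha := (p-N)/[N(p-1)] = 1-\beta$, and using the coarea identities $\int_\Omega F(u)\,dx = \int_0^M f(t)\mu(t)\,dt$ and $\int_\Omega uf(u)\,dx = \int_0^M tf(t)(-\mu'(t))\,dt$, one obtains
\begin{equation*}
\mathfrak{D} \ge N\int_0^M f(t)\mu(t)\Bigl[(|\Omega|/\mu(t))^\alpha - 1\Bigr]dt - \frac{p-N}{p}\int_0^M tf(t)(-\mu'(t))\,dt. \qquad (\diamond)
\end{equation*}
The case $p = N$ is immediate from $(\diamond)$, since both $\alpha$ and $(p-N)/p$ vanish.

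For $p \neq N$, the change of variables $s = \mu(t)$ (so $(-\mu')dt = -ds$, $u^\#(s) = t$) combined with Fubini, using the antiderivative identity $\mu[(|\Omega|/\mu)^\alpha - 1] = \int_0^\mu [\beta(|\Omega|/s)^\alpha - 1]\,ds$ and an integration by parts against $\Psi(s) := |\Omega|^\alpha s^\beta - s$ (which vanishes at $s=0$ and $s=|\Omega|$, killing all boundary terms), reduces $(\diamond)$ to the one-dimensional inequality
\begin{equation*}
N\int_0^{|\Omega|} \Psi(s)\,f(u^\#(s))\,W(s)\,ds \ge \frac{p-N}{p}\int_0^{|\Omega|} u^\#(s)\,f(u^\#(s))\,ds, \qquad W := -(u^\#)'.
\end{equation*}
When $p > N$ one has $\Psi \ge 0$ throughout and closes the estimate via a Chebyshev-type rearrangement argument — $\Psi$ has mean zero (equivalently $\Psi(0) = \Psi(|\Omega|) = 0$), and its derivative $\Psi'(s) = \beta(|\Omega|/s)^\alpha - 1$ changes sign exactly once, so one pairs the positive and negative parts of the integrand against the monotone data $u^\#$. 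When $1 < p < N$, $\Psi \le 0$ and the signs of both sides reverse: here the pinching $\phi \le f \le \tfrac{Np}{N-p}\phi$ with $\phi$ nonincreasing is used, bounding $f$ above by $\tfrac{Np}{N-p}\phi$ in the unfavorable term $\int(-\Psi)fW\,ds$ and below by $\phi$ in the favorable term $\int u^\# f\,ds$, reducing everything to an inequality involving only $\phi$ that follows from its monotonicity.

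The main obstacle is the closing of the one-dimensional inequality in the $p \neq N$ regimes: in both cases the positive and negative contributions must be tracked by integration by parts, and the constant $\tfrac{Np}{N-p}$ in the hypothesis is precisely sharp for the $1 < p < N$ case, exactly absorbing the contribution of $\tfrac{p-N}{p}\int u f(u)\,dx$.
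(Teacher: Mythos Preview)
Your derivation up through $(\diamond)$ and the reduction to the one-dimensional inequality
\[
N\int_0^{|\Omega|}\Psi(s)\,f(u^\#(s))\,W(s)\,ds \;\ge\; \frac{p-N}{p}\int_0^{|\Omega|} u^\#(s)\,f(u^\#(s))\,ds
\]
is correct, but this inequality is \emph{false} for general nonincreasing $u^\#$, so no Chebyshev-type argument can close it. For a counterexample take $|\Omega|=1$, $f\equiv 1$, $p>N$, and let $u^\#$ equal $1$ on $[0,1-\epsilon]$ and drop linearly to $0$ on $[1-\epsilon,1]$. Then $W$ is supported near $s=1$, where $\Psi(s)=s^\beta-s$ vanishes (indeed $\Psi(s)\approx \alpha(1-s)$ there), so the left side is $O(\epsilon)$ while the right side tends to $\tfrac{p-N}{p}>0$. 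The trouble is that by integrating the pointwise inequality $G^{1/(p-1)}(-G')\ge \kappa_N^{p/(p-1)} f(t)\mu(t)^\beta$ over $[0,M]$ to obtain $(\star)$, you have discarded information that is essential when $p\neq N$. (Incidentally, the assertion that ``$\Psi$ has mean zero'' is incorrect; it is $\Psi'$ whose integral over $[0,|\Omega|]$ vanishes, since $\Psi(0)=\Psi(|\Omega|)=0$.)

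What Serra and the paper do instead is retain the pointwise inequality and combine it with the monotonicity of
\[
K(t)=G(t)^{\frac{p}{p-1}}\,\mu(t)^{\frac{p-N}{N(p-1)}}.
\]
The function $K$ is nonincreasing on $[0,M]$: automatically when $p\ge N$ (both factors are nonincreasing with nonnegative exponents), and under the pinching condition $\phi\le f\le\frac{Np}{N-p}\phi$ when $1<p<N$, since that condition forces $\frac{p}{p-1}\mu(t)f(t)+\frac{p-N}{N(p-1)}G(t)\ge 0$. One then writes $K(0)\ge\int_0^M(-K')\,dt$, expands $-K'$ as the product of this nonnegative factor with $-G^{1/(p-1)}\mu^{\frac{p-N}{N(p-1)}-1}\mu'\ge\kappa_N^{p/(p-1)}$, and a short Fubini computation gives $\mathfrak{D}\ge 0$ directly. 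The exponents in $K$ are engineered precisely so that the troublesome term $\frac{p-N}{p}\int_\Omega uf(u)\,dx$ emerges from $-K'$ rather than having to be estimated separately, which is exactly the step your route cannot handle.
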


\begin{proof}
For convenience, denote $\alpha=\frac{p}{p-1}$ and $\beta=\frac{p-N}{N(p-1)}$ as in \cite{Ser13}. Let $$ I_u(t)=\int_{\Lambda_u^t} f(u)\,dx\quad\text{and}\quad K(t)=[I_u(t)]^\alpha[J_u(t)]^\beta.$$ From the proof of \cite[Lemma 4]{Ser13}, it is known that $I'_u(t)=f(t)J'_u(t)$ for a.e. $t\in (0,M)$. Moreover,  it was derived in \cite[Formula (2.12)]{Ser13} that the following isoperimetric-H\"older type inequality holds true\footnote{The inequality \eqref{est-eq:iso-Holder} is valid for any $p>1$ without requiring the extra assumption on $f$ made in \eqref{est-eq:condition-f}. Actually, that assumption is simply used to guarantee the function $K(t)$ to be nonincreasing when $1<p<N$, as explained in \cite[Remark 2]{Ser13}.} 
\begin{equation}\label{est-eq:iso-Holder}
	-I_u^{\alpha-1}(t)J'_u(t)\ge \kappa_N^{\alpha}J_u^{1-\beta}(t)\quad\text{for a.e. }t\in (0,M).
\end{equation}
Since $K(t)$ is nonincreasing on $[0,M]$ according to \cite[Remark 2]{Ser13}, by \eqref{est-eq:iso-Holder} one can deduce that
	\begin{align}
		\left[\int_\Omega f(u)\,dx\right]^\alpha |\Omega|^\beta&\ge\int_0^M -K'(t)\,dt \notag\\
		&=\int_0^M -(\alpha J_u(t) f(t)+\beta I_u(t)) I_u^{\alpha-1}(t)J_u^{\beta-1}(t)J'_u(t)\,dt \label{est-eq:dK}\\
		&\ge\kappa_N^{\alpha}\int_0^M (\alpha J_u(t) f(t)+\beta I_u(t))\,dt \label{est-eq:lower-dK}\\
		&=\kappa_N^{\alpha}\int_0^M \int_{\Omega}(\alpha f(t)+\beta f(u(x)))\chi_{\Omega_t}\,dx\,dt \notag\\
		&=\kappa_N^{\alpha}\int_{\Omega}\int_0^u(\alpha f(t)+\beta f(u(x)))\,dt\,dx \notag\\
		&=\frac{\alpha\kappa_N^{\alpha}}{N}\left(N\int_\Omega F(u)\,dx+\frac{p-N}{p}\int_\Omega uf(u)\,dx\right). \label{est-eq:integral-L}
	\end{align}
	This shows $\mathfrak{D}\ge 0$.
\end{proof}

	\section{A comparison result of Talenti type} \label{sec:com}
	In this section, we study the solution $v$ to problem \eqref{in-eq:pro-v}. In the spirit of Talenti \cite{Talenti76,Talenti79}, we develop a comparison result for $v$ in terms of solutions $u$ to problem \eqref{in-eq:pro}; see Theorem \ref{com-thm:Talenti} below. 
	
	
	\begin{theorem} \label{com-thm:Talenti}
		Let $p>1$ and let $\Omega$, $f$ and $u$ be as in Theorem \ref{thm:main}. Let $v$ be the solution to \eqref{in-eq:pro-v}. Then $v(x)\geq u^*(x)$ for $x\in\Omega^*$. 
		
	Furthermore, under the assumptions of Theorem \ref{thm:main}, there exists a constant $C$, depending only on $N,p,M_f,m_f$ (and on $s$ if $1<p<N$), such that
\begin{equation}\label{com-eq:bound-v-u*}
	\|v-u^*\|_{L^\infty(\Omega^*)}\le C\mathfrak{D}^{\frac{p}{N(p-1)+p}},
\end{equation}
where $\mathfrak{D}$ is given by \eqref{in-eq:def-D}.
\end{theorem}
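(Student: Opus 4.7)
The plan is to adapt the level-set/rearrangement argument of Lemma~\ref{est-lem:positivity-D} into a quantitative comparison in the spirit of \cite{ABMP23}, and splits into three stages: a Talenti-type pointwise comparison $u^*\le v$, an explicit identity writing $v-u^*$ as an integral of the deficit in \eqref{est-eq:iso-Holder}, and a truncation--interpolation argument. Throughout I set $\alpha=p/(p-1)$, $\beta=(p-N)/(N(p-1))$, and denote by $R=(|\Omega|/\omega_N)^{1/N}$ the radius of $\Omega^*$.

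For the pointwise comparison, integrating $-\Delta_p u=f(u)$ over the superlevel set $\{u>t\}$ yields $\int_{\Gamma_u^t}|\nabla u|^{p-1}\,d\mathcal H^{N-1}=I_u(t)$; combining with the H\"older inequality $\mathcal H^{N-1}(\Gamma_u^t)^p\le I_u(t)|J_u'(t)|^{p-1}$ and the isoperimetric inequality recovers \eqref{est-eq:iso-Holder}. Passing to the radial variable $r=(J_u(t)/\omega_N)^{1/N}$ and using the equimeasurability $I_u(t)=N\omega_N\int_0^r s^{N-1}f(u^*(s))\,ds$, this inequality reformulates as $|(u^*)'(r)|^{p-1}\le \frac{1}{r^{N-1}}\int_0^r s^{N-1}f(u^*(s))\,ds=|v'(r)|^{p-1}$, where the last equality comes from integrating the radial ODE for $v$. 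Integration from $r$ to $R$ (using $u^*(R)=v(R)=0$) then gives $u^*\le v$ on $\Omega^*$.

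For the quantitative part, let $E(t):=-I_u(t)^{\alpha-1}J_u'(t)-\kappa_N^\alpha J_u(t)^{1-\beta}\ge 0$ be the pointwise deficit in \eqref{est-eq:iso-Holder}. Keeping $E$ through the above change of variables produces the identity
\[
v(r)-u^*(r)=\int_0^{u^*(r)}\frac{E(t)}{\kappa_N^\alpha J_u(t)^{1-\beta}}\,dt,
\]
so that $\|v-u^*\|_{L^\infty(\Omega^*)}$ equals the same integral extended to $[0,M]$. On the other hand, revisiting the derivation of $\mathfrak D\ge 0$ in Lemma~\ref{est-lem:positivity-D} (lines~\eqref{est-eq:dK}--\eqref{est-eq:integral-L}) while keeping the deficit explicit gives
\[
\int_0^M E(t)\bigl(\alpha f(t)J_u(t)+\beta I_u(t)\bigr)J_u(t)^{\beta-1}\,dt\le\frac{\alpha\kappa_N^\alpha}{N}\,\mathfrak D.
\]
Under \eqref{pgeN} or \eqref{in-eq:conditions-f} one checks $\alpha f(t)J_u(t)+\beta I_u(t)\ge c_0 f(t)J_u(t)$ with $c_0=c_0(N,p,s)>0$: trivial for $p\ge N$ since $\beta\ge 0$, while for $1<p<N$ the monotonicity of $\phi$ yields $I_u\le\tfrac{Np-s}{N-p}f J_u$ and hence $c_0=s/(N(p-1))$. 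Combined with $f\ge m_f$, this gives $\int_0^M E(t)J_u(t)^\beta\,dt\le C\mathfrak D$ with $C=C(N,p,s,m_f)$.

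To conclude, one splits $[0,M]$ at a threshold $t_\tau$ defined by $J_u(t_\tau)=\tau$: on $\{J_u\ge\tau\}$ the bound $J_u^{\beta-1}\le\tau^{\beta-1}$ together with the integrated deficit estimate controls that contribution by $\lesssim\mathfrak D/\tau$; on $\{J_u<\tau\}$ a pointwise bound on $E$ combined with a coarea change of variables yields a contribution of order $\tau^\beta$. Optimizing in $\tau$ produces the exponent $p/(N(p-1)+p)$. The principal obstacle is precisely this interpolation: the weight $1/J_u^{1-\beta}$ is singular at $t=M$ (where $J_u\to 0$), and the argument must extract the correct pointwise control on $E$ using only $N,p,M_f,m_f,s$ (rather than through $\|\nabla u\|_\infty$ or $|\Omega|$) so that the final constant $C$ in \eqref{com-eq:bound-v-u*} has the claimed dependencies; this is also where the hypothesis $s>0$ (in the case $1<p<N$) plays a decisive role, by preventing $c_0$ above from vanishing and thereby keeping the balancing non-degenerate.
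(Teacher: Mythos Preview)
Your overall strategy coincides with the paper's: the same pointwise comparison via \eqref{est-eq:iso-Holder}, the same identity $\|v-u^*\|_{L^\infty}=\kappa_N^{-\alpha}\int_0^M E(t)\,J_u(t)^{\beta-1}\,dt$ (this is \eqref{com-eq:formula-v-u*}), the same weighted deficit bound (your displayed inequality is \eqref{com-eq:ajfbi-D}, though the constant should be $N\kappa_N^\alpha/\alpha$ rather than $\alpha\kappa_N^\alpha/N$) combined with the lower bound $\alpha fJ_u+\beta I_u\ge \sigma m_f J_u$, and the same splitting at a threshold level of $J_u$.

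The gap is exactly where you locate it, the tail $\{J_u<\tau\}$, and your treatment of it is incorrect. The order $\tau^\beta$ cannot be right: for $p\le N$ one has $\beta\le 0$, so $\tau^\beta$ does not even tend to $0$ as $\tau\to 0$, and in no case does balancing $\mathfrak D/\tau$ against $\tau^\beta$ produce the exponent $p/(N(p-1)+p)$ you announce. No ``pointwise bound on $E$'' is needed. Since $E\,J_u^{\beta-1}=-I_u^{\alpha-1}J_u^{\beta-1}J'_u-\kappa_N^\alpha$ and the constant term is nonpositive, one simply drops it, uses $I_u\le M_f J_u$ to get $-I_u^{\alpha-1}J_u^{\beta-1}J'_u\le M_f^{\alpha-1}\bigl(-J_u^{\alpha+\beta-2}J'_u\bigr)$, and integrates exactly:
\[
\int_{\{J_u<\tau\}}-J_u^{\alpha+\beta-2}J'_u\,dt=\frac{\tau^{\,\alpha+\beta-1}}{\alpha+\beta-1},\qquad \alpha+\beta-1=\frac{p}{N(p-1)}>0,
\]
so the singularity at $J_u\to 0$ is harmless. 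Balancing $\mathfrak D/\tau$ against $\tau^{\alpha+\beta-1}$ gives $\tau=\mathfrak D^{1/(\alpha+\beta)}$ and the exponent $(\alpha+\beta-1)/(\alpha+\beta)=p/(N(p-1)+p)$. This step involves only $M_f$, $N$, $p$; your concern about $|\Omega|$ or $\|\nabla u\|_\infty$ entering the constant is unfounded.
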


\begin{remark}
	We mention that problem \eqref{in-eq:pro-v} was considered previously by Kesavan and Pacella \cite{KP94} in the study of a generalized form of the Payne--Rayner inequality. Under the assumption that $f$ is continuous and satisfies $f(t)>0$ for $t>0$, they demonstrated there that the quality case of this inequality forces $v=u^*=u$, by exploiting the result in \cite{BZ} on the characterization of extremals in the P\'olya--Szeg\"o principle. Based on this, in \cite{KP94} they also established symmetry and monotonicity property for solutions to problem \eqref{in-eq:pro} when $\Omega$ is a ball and $p=N$, extending the work of Lions \cite{Lions81}.
\end{remark}

The proof of Theorem \ref{com-thm:Talenti} exploits the arguments deriving $\mathfrak{D}\ge 0$ in the proof of Lemma \ref{est-lem:positivity-D}. To begin with, let us define
\begin{equation}\label{com-eq:def-t-et}
	t_{\epsilon,\tau}=\sup \left\{t>0:J_u(t)>\epsilon^{\frac{N\tau}{N-1}}\right\},
\end{equation} 
for $\epsilon,\tau>0$ such that $\epsilon^{\frac{N\tau}{N-1}}<|\Omega|$. Since $J_u(t)$ is right continuous, we have
\begin{equation}\label{com-eq:Jt_et}
	J_u(t_{\epsilon,\tau})\le \epsilon^{\frac{N\tau}{N-1}}.
\end{equation}
We also notice that the solution $v$ to \eqref{in-eq:pro-v} is radially symmetric and it is given by
\begin{equation}\label{com-eq:formula-v}
	v(x)=\tilde{v}(\omega_N|x|^N)=(\kappa_N)^{-\frac{p}{p-1}}\int_{\omega_N|x|^N}^{|\Omega|}\xi^{\frac{(1-N)p}{N(p-1)}}\left[\int_0^\xi f(u^\#(s))\,ds\right]^{\frac{1}{p-1}}d\xi.
\end{equation}

\begin{proof}[Proof of Theorem \ref{com-thm:Talenti}]
Since $u\in W_0^{1,p}(\Omega)$, it follows from (i) in Lemma \ref{pre-lem:prop-rea} that $u^\#(J_u(t))=t $ for any $t\in [0,M]$. Thus, $$I'_u(t)=f(t)J'_u(t)=f(u^\#(J_u(t)))J'_u(t)\quad\text{for a.e. }t\in (0,M),$$ which entails 
\begin{equation}\label{com-eq:formula-I}
	I_u(t)=\int_0^{J_u(t)}f(u^\#(s))\,ds.
\end{equation}
Then, recall from \eqref{est-eq:iso-Holder} that
\begin{equation}\label{com-eq:iso-Holder-v}
	1\le -\kappa_N^{-\alpha}I_u^{\alpha-1}(t)J_u^{\beta-1}(t)J'_u(t) \quad\text{for a.e. }t\in (0,M),
\end{equation}
where $\alpha=\frac{p}{p-1}$ and $\beta=\frac{p-N}{N(p-1)}$. Thus, given $0<t\le M$, we integrate \eqref{com-eq:iso-Holder-v} over $(0,t)$ and use \eqref{com-eq:formula-I} to get
\begin{align}
	u^\#(J_u(t))=t& \le \kappa_N^{-\alpha}\int_0^t -I_u^{\alpha-1}(t)J_u^{\beta-1}(t)J'_u(t) \,dt \label{com-eq:compare-u-v} \\
	&=\kappa_N^{-\alpha}\int_{J_u(t)}^{|\Omega|}\xi^{\beta-1}\left[\int_0^\xi f(u^\#(s))\,ds\right]^{\alpha-1}d\xi \notag\\
	&= \tilde{v}(J_u(t)), \notag
\end{align}
where $\tilde{v}$ is given in \eqref{com-eq:formula-v}. This implies $u^*(x)\le v(x)$ for $x\in\Omega^*$.

To show \eqref{com-eq:bound-v-u*}, we first observe from \eqref{com-eq:compare-u-v} that
\begin{equation}\label{com-eq:formula-v-u*}
	\|v-u^*\|_{L^\infty(\Omega^*)}=\kappa_N^{-\alpha}\int_0^M \left[ -I_u^{\alpha-1}(t)J_u^{\beta-1}(t)J'_u(t)-\kappa_N^\alpha\right]dt.
 \end{equation}
 On the other hand, in view of \eqref{est-eq:dK} and \eqref{est-eq:lower-dK}, we find that
\begin{equation}\label{com-eq:ajfbi-D}
	\kappa_N^{-\alpha}\int_0^M(\alpha J_u(t) f(t)+\beta I_u(t))\left[ -I_u^{\alpha-1}(t)J_u^{\beta-1}(t)J'_u(t)-\kappa_N^\alpha\right]dt\le \frac{N}{\alpha}\mathfrak{D}.
\end{equation}

We aim to conclude \eqref{com-eq:bound-v-u*} by combining \eqref{com-eq:formula-v-u*} and \eqref{com-eq:ajfbi-D}, under the assumptions of Theorem \ref{thm:main}. For this, we first claim that, there is a constant $\sigma=\sigma(p)$ such that
\begin{equation}\label{com-eq:low-ajfbi}
	\alpha J_u(t) f(t)+\beta I_u(t)\ge \sigma m_f J_u(t)\quad\text{for any }t\in (0,M).
\end{equation}
Indeed, since $\beta\ge0$ when $p\ge N$, it holds in this case 
$$\alpha J_u(t) f(t)+\beta I_u(t)\ge (\alpha+\beta) m_f J_u(t).$$ 
When $p<N$, by assumption \eqref{in-eq:conditions-f} we infer that
\begin{align*}
I_u(t)=\int_{\Lambda_u^t} f(u)\,dx\le\frac{Np-s}{N-p}\int_{\Lambda_u^t}\phi(u)\,dx & \le \frac{Np-s}{N-p}\phi(t)J_u(t) \\
& \le \frac{Np-s}{N-p}f(t)J_u(t),
\end{align*}
thus leading to
\begin{equation*}
\alpha J_u(t) f(t)+\beta I_u(t)\ge \frac{s\,m_f}{N(p-1)}J_u(t).
\end{equation*}
Hence, \eqref{com-eq:low-ajfbi} holds with 
\begin{equation}\label{com-eq:def-sigma}
	\sigma=
	\begin{cases}
		1+\frac{p}{N(p-1)} & \text{if }p\ge N\\
		\frac{s}{N(p-1)} & \text{if }1<p<N.
		\end{cases}
\end{equation}

Exploiting \eqref{com-eq:def-t-et}, \eqref{com-eq:Jt_et}, \eqref{com-eq:iso-Holder-v} and \eqref{com-eq:low-ajfbi}, we obtain that, for any $\epsilon,\tau>0$ with $\epsilon^{\frac{N\tau}{N-1}}<|\Omega|$, 
\begin{align*}
	&\int_0^M \left[ -I_u^{\alpha-1}(t)J_u^{\beta-1}(t)J'_u(t)-\kappa_N^\alpha\right]dt\\
	=& \int_0^{t_{\epsilon,\tau}} \left[ -I_u^{\alpha-1}(t)J_u^{\beta-1}(t)J'_u(t)-\kappa_N^\alpha\right]dt+ \int_{t_{\epsilon,\tau}}^M \left[ -I_u^{\alpha-1}(t)J_u^{\beta-1}(t)J'_u(t)-\kappa_N^\alpha\right]dt\\
	\le & \int_0^{t_{\epsilon,\tau}}\frac{\alpha J_u(t) f(t)+\beta I_u(t)}{\sigma m_f J_u(t)}\left[ -I_u^{\alpha-1}(t)J_u^{\beta-1}(t)J'_u(t)-\kappa_N^\alpha\right]dt \\ 
	& + (M_f)^{\alpha-1} \int_{t_{\epsilon,\tau}}^M -J_u^{\alpha+\beta-2}(t)J'_u(t)\,dt\\
	\le & \,\frac{\epsilon^{\frac{-N\tau}{N-1}}}{\sigma m_f}\int_0^M (\alpha J_u(t) f(t)+\beta I_u(t))\left[ -I_u^{\alpha-1}(t)J_u^{\beta-1}(t)J'_u(t)-\kappa_N^\alpha\right]dt\\
	& + \frac{(M_f)^{\alpha-1}}{\alpha+\beta-1} J_u^{\alpha+\beta-1}(t_{\epsilon,\tau})\\
	\le & \, \frac{N\kappa_N^{\alpha}}{\alpha \sigma m_f}\epsilon^{\frac{-N\tau}{N-1}}\mathfrak{D}+\frac{(M_f)^{\alpha-1}}{\alpha+\beta-1}\epsilon^{\frac{N\tau(\alpha+\beta-1)}{N-1}}.
\end{align*}
Thus, in terms of \eqref{com-eq:formula-v-u*}, we conclude
\begin{equation*}
		\|v-u^*\|_{L^\infty(\Omega^*)}\le \frac{N(p-1)}{p \sigma m_f}\epsilon^{\frac{-N\tau}{N-1}}\mathfrak{D}+\frac{N(p-1)(M_f)^{\frac{1}{p-1}}}{p \kappa_N^{\frac{p}{p-1}}}\epsilon^{\frac{p\tau}{(N-1)(p-1)}}.
\end{equation*}
This validates \eqref{com-eq:bound-v-u*} by taking $\epsilon=\mathfrak{D}$ and $\tau=\frac{(N-1)(p-1)}{N(p-1)+p}$, provided 
\begin{equation}\label{eq:DleO}
	\mathfrak{D}^{\frac{N(p-1)}{N(p-1)+p}}<|\Omega|.
\end{equation}
If \eqref{eq:DleO} is false, observe from \eqref{com-eq:formula-v} that
\begin{align*}
		\|v-u^*\|_{L^\infty(\Omega^*)}\le \max_{\Omega^*} v=v(0)& \le \frac{N(p-1)(M_f)^{\frac{1}{p-1}}}{p \kappa_N^{\frac{p}{p-1}}} |\Omega|^{\frac{p}{N(p-1)}}\\
		& \le \frac{N(p-1)(M_f)^{\frac{1}{p-1}}}{p \kappa_N^{\frac{p}{p-1}}} \mathfrak{D}^{\frac{p}{N(p-1)+p}},
\end{align*}
still verifying \eqref{com-eq:bound-v-u*}. 

The proof is complete.
\end{proof}
	
	\begin{remark}\label{com-rk:C-s-mf}
		As seen in the above proof, the constant $C$ in \eqref{com-eq:bound-v-u*} is actually derived explicitly and it is given by
		$$C=\frac{N(p-1)}{p \sigma m_f}+\frac{N(p-1)(M_f)^{\frac{1}{p-1}}}{p \kappa_N^{\frac{p}{p-1}}},$$
where $\sigma$ is given in \eqref{com-eq:def-sigma}.
	\end{remark}

\section{Proof of Theorem \ref{thm:main}} \label{sec:pf}

 To prove Theorem \ref{thm:main}, in this section we first apply the quantitative P\'olya--Szeg\"o principle presented in Theorem \ref{pre-thm:PZ} to obtain a preliminary estimate on the $L^1$ distance between the solution $u$ to \eqref{in-eq:pro} and its Schwarz symmetrization $u^*$; see Proposition \ref{pf-pro:u-ustar} below. This, together with Proposition \ref{est-pro:D-iso} and Theorem \ref{com-thm:Talenti}, concludes the result in Theorem \ref{thm:main}, as outlined in Subsection \ref{sub-sec:strategy}.

Hereafter, as agreed for $u$, we regard $v$ as a function in $\mathbb{R}^N$ by extending it to be 0 outside $\overline{\Omega^*}$.


\begin{proposition} \label{pf-pro:u-ustar}
	Let $\Omega$, $f$ and $u$ be as in Theorem \ref{thm:main}. Let $v$ be the solution to \eqref{in-eq:pro-v}. Suppose that the assumptions stated in Theorem \ref{thm:main} on $p$ and $f$ hold. Then there exist constants $\theta=\theta(N,p)$ and $C=C(N,p,|\Omega|,M,M_f,m_f)$ such that
	\begin{equation}\label{pf-eq:L1-u-ustar}
		\inf_{x_0\in\mathbb{R}^N}\int_{\mathbb{R}^N}|u(x)- u^*(x+x_0)|\,dx\le C \|v-u^*\|_{L^\infty(\Omega^*)}^{\theta}.
	\end{equation}
\end{proposition}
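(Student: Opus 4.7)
The plan is to apply the quantitative P\'olya--Szeg\H{o} principle (Theorem \ref{pre-thm:PZ}) to $u$ with a carefully chosen $\delta>0$. Setting $\eta:=\|v-u^*\|_{L^\infty(\Omega^*)}$, I need to estimate both $\mathcal{E}_u$ and $\mathcal{M}_{u^*}(\delta)$ in terms of $\eta$. A preliminary ingredient is a uniform lower bound $\mathcal{I}:=\int_\Omega|\nabla u|^p\,dx\ge c_0(N,p,|\Omega|,M,M_f)>0$, obtainable by combining $\mathcal{I}=\int uf(u)\,dx\le M_f\|u\|_{L^1}$ with Sobolev-type inequalities that encode the constraint $\|u\|_{L^\infty}=M$.

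To estimate $\mathcal{E}_u$, testing the equation for $u$ against $u$ itself and using equimeasurability gives $\mathcal{I}=\int u f(u)\,dx=\int u^*f(u^*)\,dx$; testing the equation for $v$ against $u^*\in W_0^{1,p}(\Omega^*)$ yields $\int|\nabla v|^{p-2}\nabla v\cdot\nabla u^*\,dx=\mathcal{I}$. H\"older's inequality then produces $\mathcal{I}^p\le\bigl(\int|\nabla v|^p\bigr)^{p-1}\int|\nabla u^*|^p$, whence $\mathcal{E}_u+1\le(\|\nabla v\|_{L^p}^p/\mathcal{I})^{p-1}$. Since $\|\nabla v\|_{L^p}^p=\int v f(u^*)\,dx\le\mathcal{I}+M_f|\Omega|\eta$ (using $v\ge u^*$ and $v\le u^*+\eta$), the lower bound on $\mathcal{I}$ yields $\mathcal{E}_u\lesssim\eta$.

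To estimate $\mathcal{M}_{u^*}(\delta)$ I would revisit the proof of Theorem \ref{com-thm:Talenti}. With $\alpha=p/(p-1)$ and $\beta=(p-N)/(N(p-1))$, a direct computation from \eqref{com-eq:formula-v} shows that at each radius $r\in(0,R_*)$ of $\Omega^*$,
\[
|\nabla v|(r)=|\nabla u^*|(r)\bigl(1+\kappa_N^{-\alpha}H(u^*(r))\bigr),
\]
where $H(t):=-I_u^{\alpha-1}(t)J_u^{\beta-1}(t)J_u'(t)-\kappa_N^\alpha\ge 0$ satisfies $\int_0^M H\,dt=\kappa_N^\alpha\eta$ by \eqref{com-eq:formula-v-u*}. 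The bound $I_u(t)\ge m_f J_u(t)$, together with \eqref{com-eq:formula-v}, gives $|\nabla v|(r)\ge c\,r^{1/(p-1)}$ with $c=c(N,p,m_f)$. Hence $|\nabla u^*|(r)\le\delta$ forces either $r\lesssim\delta^{p-1}$ (contributing $\mathbb{R}^N$-measure at most $C\delta^{N(p-1)}$) or $H(u^*(r))\gtrsim 1$; on the latter set I would use the equimeasurability identity $\int_\Omega\mathbf{1}_{\{H(u^*)>\kappa_N^\alpha\}}\,dx=\int_{\{H>\kappa_N^\alpha\}}(-J_u')\,dt$, the formula $-J_u'=(H+\kappa_N^\alpha)J_u^{1-\beta}/I_u^{\alpha-1}$, and Chebyshev's inequality applied to $H$ to bound the corresponding measure by $C\eta$. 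Altogether, $\mathcal{M}_{u^*}(\delta)\lesssim\delta^{N(p-1)}+\eta$.

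Inserting these estimates into \eqref{pre-eq:quant-PS}, together with $\|\nabla u^*\|_{L^p}\le\|\nabla u\|_{L^p}\le C$ and $J_u(0)\le|\Omega|$, the right-hand side is bounded by a sum of the form $\delta^{N(p-1)}+\eta+\eta^r+\eta^r/\delta$. Choosing $\delta=\eta^a$ with $a=r/(N(p-1)+1)$ for a fixed $r\in(0,\min\{r_1,r_2\}]$ balances these contributions and yields \eqref{pf-eq:L1-u-ustar} with $\theta=r_3rN(p-1)/(N(p-1)+1)$. I expect the main obstacle to be the estimate of $\mathcal{M}_{u^*}(\delta)$: when $p\in(1,N/(N-1)]$ the exponent $1-\beta-1/(p-1)=(p(N-1)-N)/(N(p-1))$ of $J_u$ in the bound $-J_u'\lesssim(H+\kappa_N^\alpha)J_u^{1-\beta-1/(p-1)}$ becomes non-positive, so the Chebyshev step requires an additional splitting $\{J_u\ge J_*\}\cup\{J_u<J_*\}$ with $J_*=J_*(\eta)$ suitably chosen, resulting in a slightly worse but still positive exponent $\theta=\theta(N,p)$.
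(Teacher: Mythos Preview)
Your overall architecture matches the paper's exactly: apply Theorem~\ref{pre-thm:PZ} and control $\mathcal{E}_u$ and $\mathcal{M}_{u^*}(\delta)$ in terms of $\eta=\|v-u^*\|_{L^\infty(\Omega^*)}$, then optimize over~$\delta$. The differences are in how the two deficits are estimated.

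For $\mathcal{E}_u$, the paper (Lemma~\ref{pf-lem:Du-Dustar}) is more direct: it uses $\int|\nabla u|^p=\int u^*f(u^*)\le\int vf(u^*)=\int|\nabla v|^p$ together with the minimization property of~$v$ to get $\int|\nabla u|^p-\int|\nabla u^*|^p\le pM_f|\Omega|\eta$ \emph{without} dividing by $\mathcal{I}$. The paper then carries the factor $\|\nabla u^*\|_{L^p}^{-pr}$ through \eqref{pre-eq:quant-PS} and chooses $r<1/(pr_3)$ so that the net power of $\|\nabla u^*\|_{L^p}$ is positive, allowing the \emph{upper} bound \eqref{pf-eq:est-Dustar} to suffice. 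Your route instead needs a uniform lower bound $\mathcal{I}\ge c_0>0$, and your sketched justification (``Sobolev-type inequalities that encode $\|u\|_{L^\infty}=M$'') does not work when $p\le N$: there is no control of $\|u\|_{L^\infty}$ by $\|\nabla u\|_{L^p}$ in that range, and the constraint $\max u=M$ alone gives no lower bound on $\int u$. Such a lower bound \emph{is} available, but via the PDE: since $f(u^*)\ge m_f$ one has $v\ge w$ where $-\Delta_p w=m_f$ in $\Omega^*$ with $w=0$ on $\partial\Omega^*$, whence $\int u^*\ge\int v-|\Omega|\eta\ge\int w-|\Omega|\eta$, and $\int w=c(N,p,m_f,|\Omega|)>0$. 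So the gap is fixable, but not by the argument you indicated.

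For $\mathcal{M}_{u^*}(\delta)$ your route genuinely differs from the paper's (Lemma~\ref{pf-lem:mea-Dustar}). Your identity $|\nabla v|(r)=|\nabla u^*|(r)\bigl(1+\kappa_N^{-\alpha}H(u^*(r))\bigr)$ is correct and gives a cleaner dichotomy (small radius vs.\ $H>\kappa_N^\alpha$); using $-J_u'=(H+\kappa_N^\alpha)I_u^{1-\alpha}J_u^{1-\beta}$ and $I_u\ge m_fJ_u$ you bound the bad set by $C\eta$ when $p>N/(N-1)$, and your splitting in $J_u$ handles $p\le N/(N-1)$ with exponent $N(p-1)/p$. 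The paper instead decomposes via a ``singular set'' $S$ of levels where $\int_{\Gamma_v^t}|\nabla v|^{p-1}-\int_{\Gamma_{u^*}^t}|\nabla u^*|^{p-1}$ is large, controls $|S|$ through \eqref{pf-eq:Dv-Dustar}, and arrives at the exponent $N(p-1)/(2N(p-1)+1)$ uniformly in~$p$. Your exponents are in fact better (since $N(p-1)/p>N(p-1)/(2N(p-1)+1)$ for all $p>1$), at the cost of the additional case distinction.
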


The proof of Proposition \ref{pf-pro:u-ustar} is based on Theorem \ref{pre-thm:PZ}. The key ingredient is to estimate the quantities $\mathcal{E}_u$ and $\mathcal{M}_{u^*}(\delta)$ appearing in \eqref{pre-eq:quant-PS} in terms of the size of $\|v-u^*\|_{L^\infty(\Omega^*)}$. This is achieved in the following two lemmas by adapting the arguments exploited in \cite{ABMP23} for deriving a quantitative Talenti's inequality.

\begin{lemma} \label{pf-lem:Du-Dustar}
	Under the assumptions of Proposition \ref{pf-pro:u-ustar}, it holds
	\begin{equation*}
	\int_{\mathbb{R}^N} |\nabla u|^p\,dx-\int_{\mathbb{R}^N} |\nabla u^*|^p\,dx\le pM_f|\Omega| \|v-u^*\|_{L^\infty(\Omega^*)}.
	\end{equation*}
	\end{lemma}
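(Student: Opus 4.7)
The plan is to chain three inequalities that bridge $\int_{\mathbb{R}^N}|\nabla u|^p\,dx$ and $\int_{\mathbb{R}^N}|\nabla u^*|^p\,dx$ via the radial solution $v$ of \eqref{in-eq:pro-v}. First, I would use $u$ as a test function in the weak formulation of \eqref{in-eq:pro}, which yields $\int_{\mathbb{R}^N}|\nabla u|^p\,dx=\int_\Omega uf(u)\,dx$. Since $u$ and $u^*$ are equimeasurable and $tf(t)$ is a Borel locally bounded function of $t$, this can be rewritten as $\int_{\mathbb{R}^N}|\nabla u|^p\,dx=\int_{\Omega^*}u^*f(u^*)\,dx$.

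Next, I would exploit the Talenti-type comparison $u^*\le v$ proved in Theorem \ref{com-thm:Talenti}, together with $f\ge 0$, to estimate $\int_{\Omega^*}u^*f(u^*)\,dx\le \int_{\Omega^*}vf(u^*)\,dx$. Testing the equation \eqref{in-eq:pro-v} for $v$ against $v$ itself gives $\int_{\Omega^*}vf(u^*)\,dx=\int_{\Omega^*}|\nabla v|^p\,dx$, so the first chain produces
\begin{equation*}
\int_{\mathbb{R}^N}|\nabla u|^p\,dx\le \int_{\mathbb{R}^N}|\nabla v|^p\,dx.
\end{equation*}

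The remaining step is to compare $\int|\nabla v|^p$ with $\int|\nabla u^*|^p$. Here I would invoke the convexity inequality for the map $\xi\mapsto|\xi|^p$, namely $|a|^p-|b|^p\le p|a|^{p-2}a\cdot(a-b)$ applied pointwise with $a=\nabla v$, $b=\nabla u^*$. Integrating over $\Omega^*$ and using that $v-u^*\in W_0^{1,p}(\Omega^*)$ is an admissible test function for \eqref{in-eq:pro-v}, the right-hand side becomes $p\int_{\Omega^*}f(u^*)(v-u^*)\,dx$. Since $0\le v-u^*\le \|v-u^*\|_{L^\infty(\Omega^*)}$ and $0\le f(u^*)\le M_f$, this is bounded by $pM_f|\Omega|\,\|v-u^*\|_{L^\infty(\Omega^*)}$, yielding $\int|\nabla v|^p-\int|\nabla u^*|^p\le pM_f|\Omega|\,\|v-u^*\|_{L^\infty(\Omega^*)}$. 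Combining this with the previous display concludes the proof.

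There is no real obstacle: every step is a direct consequence of already available tools (the weak formulations, equimeasurability, Theorem \ref{com-thm:Talenti}, and convexity of the $p$-power). The only small point of care is to verify that $v-u^*$ is legitimately an admissible test function in the weak formulation of \eqref{in-eq:pro-v}, which is immediate since both $v$ and $u^*$ lie in $W_0^{1,p}(\Omega^*)$ and vanish on $\partial\Omega^*$.
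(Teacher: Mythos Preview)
Your proposal is correct and follows essentially the same route as the paper: the chain $\int|\nabla u|^p=\int u f(u)=\int u^*f(u^*)\le\int v f(u^*)=\int|\nabla v|^p$ is identical, and for the comparison of $\int|\nabla v|^p$ with $\int|\nabla u^*|^p$ the paper invokes the minimizing property of $v$ for the energy $\psi\mapsto\tfrac{1}{p}\int|\nabla\psi|^p-\int f(u^*)\psi$, which is just the variational reformulation of your convexity-plus-weak-formulation step and yields the same bound $p\int_{\Omega^*}f(u^*)(v-u^*)\,dx$.
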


\begin{proof}
	 By the weak formulation of \eqref{in-eq:pro} and that of \eqref{in-eq:pro-v}, we infer from Theorem \ref{com-thm:Talenti} that
	\begin{equation}\label{eq:Du-Dv}
		\int_{\mathbb{R}^N} |\nabla u|^p\,dx=\int_{\mathbb{R}^N} u f(u)\,dx=\int_{\mathbb{R}^N} u^*f(u^*)\,dx\le \int_{\mathbb{R}^N} v f(u^*)\,dx=\int_{\mathbb{R}^N} |\nabla v|^p\,dx,
	\end{equation}
where, in the second equality, we have used that the function $tf(t)$ is a nonnegative Borel function so that it preserves integrals under rearrangements (see, for instance, \cite[Theorem 1.1.1]{Kes06}).
	
	On the other hand, notice that $v$ minimizes the functional
	$$\frac{1}{p}\int_{\Omega} |\nabla\psi|^p\,dx-\int_{\Omega} f(u^*)\psi\,dx$$
	among all $\psi\in W_0^{1,p}(\Omega^*)$, which implies
	\begin{align}
	\int_{\mathbb{R}^N} |\nabla v|^p\,dx-\int_{\mathbb{R}^N} |\nabla u^*|^p\,dx & \le p \int_{\mathbb{R}^N} f(u^*)(v-u^*)\,dx \notag\\
	& \le pM_f|\Omega|\|v-u^*\|_{L^\infty(\Omega^*)}. \label{pf-eq:Dv-Dustar}
	\end{align}
	This together with \eqref{eq:Du-Dv} proves the lemma.
	\end{proof}


	
	\begin{lemma} \label{pf-lem:mea-Dustar}
	Let $\delta>0$. Under the assumptions of Proposition \ref{pf-pro:u-ustar}, there exists a constant $C=C(N,p,|\Omega|,M_f,m_f)$ such that
		\begin{equation}\label{pf-eq:mea-Dustar}
				\left|\left\{x\in \Omega^*:|\nabla u^*(x)|\le	\delta\right\}\right| \le C\left[\delta^{N(p-1)}+\left(\|v-u^*\|_{L^\infty(\Omega^*)}\right)^{\frac{N(p-1)}{2N(p-1)+1}}\right].
		\end{equation}
		\end{lemma}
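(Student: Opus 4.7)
My plan, following the strategy outlined after Proposition \ref{pf-pro:u-ustar} and inspired by the arguments of \cite{ABMP23} for quantitative Talenti-type inequalities, is to combine a pointwise lower bound on $|\nabla v|$ with a pointwise gradient comparison $|\nabla u^*| \le |\nabla v|$, and then to split the set $E := \{x \in \Omega^* : |\nabla u^*(x)| \le \delta\}$ according to the size of $|x|$. First, by differentiating the explicit formula \eqref{com-eq:formula-v} and using that $f(u^\#(s)) \ge m_f$ for $s \in (0, |\Omega|)$, a direct computation (in which the various $\omega_N$ and $|x|^N$ exponents collapse) yields
\begin{equation*}
|\nabla v(x)| \ge c(N,p)\, m_f^{\frac{1}{p-1}}\, |x|^{\frac{1}{p-1}} \quad \text{for every } x \in \Omega^*.
\end{equation*}

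Second, I would show $|\nabla u^*(x)| \le |\nabla v(x)|$ for a.e. $x \in \Omega^*$ by revisiting the isoperimetric--H\"older inequality \eqref{com-eq:iso-Holder-v}. At points where $J_u$ is absolutely continuous and $J_u' \ne 0$, the identity $u^\#(J_u(t)) = t$ yields $(u^\#)'(J_u(t))\, J_u'(t) = 1$, so \eqref{com-eq:iso-Holder-v} rewrites as $-(u^\#)'(s) \le -\tilde v'(s)$ for a.e. $s \in (0, |\Omega|)$. Since $u^*(x) = u^\#(\omega_N |x|^N)$ and $v(x) = \tilde v(\omega_N |x|^N)$, the chain rule then transfers this to $|\nabla u^*(x)| \le |\nabla v(x)|$; on the complementary ``plateau'' set $\{\nabla u^* = 0\}$ the inequality is automatic. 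Combining this pointwise comparison with the argument of Lemma \ref{pf-lem:Du-Dustar}, specifically \eqref{pf-eq:Dv-Dustar}, I obtain the integral bound
\begin{equation*}
\int_{\Omega^*} \bigl(|\nabla v|^p - |\nabla u^*|^p\bigr)\,dx \le p M_f |\Omega|\, \lambda, \qquad \lambda := \|v-u^*\|_{L^\infty(\Omega^*)},
\end{equation*}
now with nonnegative integrand thanks to the pointwise comparison.

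Third, I would fix a threshold radius $r_*$ obeying $c(N,p)\,m_f^{1/(p-1)}\, r_*^{1/(p-1)} \ge 2\delta$, i.e., $r_* \ge c'\,\delta^{p-1}$, so that $|\nabla v(x)| \ge 2\delta \ge 2|\nabla u^*(x)|$ on $E \setminus B_{r_*}$. The inner contribution is bounded trivially: $|E \cap B_{r_*}| \le \omega_N r_*^N$. On $E \setminus B_{r_*}$, the pointwise condition $|\nabla v| \ge 2 |\nabla u^*|$ forces $|\nabla v|^p - |\nabla u^*|^p \ge (1 - 2^{-p}) |\nabla v|^p \ge C\, m_f^{p/(p-1)} |x|^{p/(p-1)}$, and integrating together with $|x|^{p/(p-1)} \ge r_*^{p/(p-1)}$ yields an estimate of the form $|E \setminus B_{r_*}| \le C\lambda / r_*^{p/(p-1)}$. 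Balancing $\omega_N r_*^N$ against $C\lambda/r_*^{p/(p-1)}$ subject to $r_* \ge c'\,\delta^{p-1}$ produces the two-term bound of the form claimed in the statement: the first term $\delta^{N(p-1)}$ comes from the regime where the lower constraint on $r_*$ is active, while the second term (with the exponent $\tfrac{N(p-1)}{2N(p-1)+1}$ emerging from the balancing computation) comes from the unconstrained optimum.

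The main obstacle I anticipate is the pointwise gradient comparison in the second step: plateaus of $u^*$ force $J_u$ to fail absolute continuity (cf.\ Lemma \ref{pre-lem:prop-rea}(iii)), so the inverse-function identity used to derive $(u^\#)' \le \tilde v'$ is not directly available there, and one must argue separately that on such plateaus $\nabla u^* = 0$ renders the inequality trivial while on the complement the standard change-of-variable relations give what is needed. The remaining optimization of $r_*$ to yield the stated exponent is then routine but requires careful bookkeeping of the various constants depending on $N$, $p$, $|\Omega|$, $M_f$ and $m_f$.
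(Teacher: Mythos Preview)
Your approach is correct and genuinely different from (and simpler than) the paper's. The pointwise comparison $|\nabla u^*(x)|\le |\nabla v(x)|$ is valid: with $s=\omega_N|x|^N$ and $t=u^\#(s)$, the isoperimetric--H\"older inequality \eqref{est-eq:iso-Holder} reads $-J_u'(t)\ge \kappa_N^{\alpha}I_u(t)^{1-\alpha}J_u(t)^{1-\beta}$, and since $I_u(t)=\int_0^s f(u^\#)$ and $J_u(t)=s$, this gives $-(u^\#)'(s)=(-J_u'(t))^{-1}\le \kappa_N^{-\alpha}s^{\beta-1}\bigl[\int_0^s f(u^\#)\bigr]^{\alpha-1}=-\tilde v'(s)$, whence $|\nabla u^*(x)|\le|\nabla v(x)|$ a.e. Your concern about plateaus is unnecessary here: under the standing hypothesis $m_f>0$, the paper notes (in the footnote within its proof of the lemma) that $|Z_u\cap\Omega|=0$, so $J_u$ is strictly decreasing and absolutely continuous and the change of variable is legitimate.

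One correction: your balancing does \emph{not} produce the exponent $\tfrac{N(p-1)}{2N(p-1)+1}$. Optimizing $\omega_N r_*^N+C\lambda\, r_*^{-p/(p-1)}$ yields $r_*\sim\lambda^{(p-1)/(N(p-1)+p)}$ and hence the bound $\delta^{N(p-1)}+\lambda^{N(p-1)/(N(p-1)+p)}$, which is \emph{stronger} than \eqref{pf-eq:mea-Dustar} since $N(p-1)+p<2N(p-1)+1$ for $N\ge 2$. So you actually prove more than the lemma asserts; just recheck your arithmetic at the end.

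For comparison, the paper does \emph{not} exploit the same-point gradient inequality. Following \cite{ABMP23}, it decomposes $A$ according to the value of $u^*$ rather than $|x|$: a top slice $\{u^*\ge t_{\varepsilon,\tau}\}$ (small by \eqref{com-eq:Jt_et}), a ``good'' region where the level-set deficit $\int_{\Gamma_v^t}|\nabla v|^{p-1}-\int_{\Gamma_{u^*}^t}|\nabla u^*|^{p-1}$ is small (handled by comparing $|\nabla u^*(x)|$ with $|\nabla v(y)|$ for $x\in\Gamma_{u^*}^t$, $y\in\Gamma_v^t$ on \emph{different} spheres), and a singular set $S$ of bad levels whose $1$-dimensional measure is controlled via the coarea formula and \eqref{pf-eq:Dv-Dustar}. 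This is considerably more involved and yields the weaker exponent stated in the lemma; your route via the pointwise gradient comparison is more direct.
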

	
	
	\begin{proof}
		For simplicity, set
		$$A:=\{x\in \Omega^*:|\nabla u^*(x)|\le\delta\} \quad\text{and}\quad \varepsilon:=\|v-u^*\|_{L^\infty(\Omega^*)}.$$ 
		Let $\tau=\frac{(N-1)(p-1)}{2N(p-1)+1}$. It suffices to prove the assertion when $0<\varepsilon<\min\{1, |\Omega|^{\frac{N-1}{N\tau}}\}$, since \eqref{pf-eq:mea-Dustar} is automatically valid if $\varepsilon$ is large. We shall validate \eqref{pf-eq:mea-Dustar} by estimating the measure of the following three subsets of $A$ respectively:
		\begin{gather*}
			A_1:=A \cap \left\{x\in \Omega^*:u^*(x)\in (0,t_{\varepsilon,\tau})\setminus S\right\}\\
			A_2:=A\cap \left\{x\in \Omega^*:u^*(x)\in (0,t_{\varepsilon,\tau})\cap S\right\} \\
			A_3:=A \cap \left\{x\in \Omega^*:u^*(x)\ge t_{\varepsilon,\tau}\right\}.
		\end{gather*}
		Let $t_{\varepsilon,\tau}$ be given by \eqref{com-eq:def-t-et} and set $$S:=\left\{t\in(0,M): \int_{\Gamma_v^t} |\nabla v|^{p-1}\,d\mathcal{H}^{N-1}-\int_{\Gamma_{u^*}^t} |\nabla u^*|^{p-1}\,d\mathcal{H}^{N-1}>\varepsilon^{\frac{N\tau}{N-1}}\right\}.$$
		

	Clearly, by \eqref{com-eq:Jt_et}, we have 
	\begin{equation}\label{pf-eq:est-A3}
		|A_3|\le |\Lambda^{t_{\varepsilon,\tau}}_{u^*}|=|\Lambda^{t_{\varepsilon,\tau}}_{u}|=J_u(t_{\varepsilon,\tau})\le \varepsilon^{\frac{N\tau}{N-1}}.
	\end{equation}
	Notice that here we have used the fact that $|\Gamma^{t_{\varepsilon,\tau}}_u|=0$.\footnote{Actually, by exploiting the regularity result \cite[Theorem 1.1]{Lou08} (see also \cite{ACF2021}), we infer that $|Z_u\cap\Omega|=0$ holds for a positive solution $u$ to problem \eqref{in-eq:pro} with $f$ positive. This implies that all level sets $\Gamma_u^t$ of $u$ fulfill $|\Gamma_u^t|=0$ for any $t>0$. Meanwhile, in this case we infer from (i) and (iii) in Lemma \ref{pre-lem:prop-rea} that $J_u(t)$ is absolutely continuous on $(0,M)$ and $|Z_{u^*}\cap\Omega^*|=0$.}

	Next, we estimate $|A_1|$ and $|A_2|$ by noting that $|\nabla v|$ is explicitly known. Indeed, one can calculate from \eqref{com-eq:formula-v} that
	\begin{equation}\label{pf-eq:formula-Dv}
		|\nabla v(x)|=\left(\frac{|x|}{N}\right)^{\frac{1}{p-1}}\left[\frac{1}{\omega_N|x|^N}\int_0^{\omega_N|x|^N}f(u^{\#}(s))\,ds\right]^{\frac{1}{p-1}}
	\end{equation}
	for any $x\neq 0$ ($\nabla v(0)=0$).

	As explained in the last footnote, when $m_f>0$ we know that $J_u(t)$ is absolutely continuous. Then by (i) in Lemma \ref{pre-lem:prop-rea} we can observe that
	\begin{equation}\label{pf-eq:levelset-v-ustar}
		\Gamma_v^t=\partial B_{\left[\frac{J_v(t)}{\omega_N}\right]^{\frac1N}}=\partial\Lambda_v^t \,,\quad \Gamma_{u^*}^t=\partial B_{\left[\frac{J_u(t)}{\omega_N}\right]^{\frac1N}}=\partial\Lambda_{u^*}^t \,,
	\end{equation}
	for each $t>0$. Thus, for $t\in(0,t_{\varepsilon,\tau})\setminus S$, $x\in \Gamma_{u^*}^t$ and $y\in \Gamma_v^t$, we have
	\begin{align}
		&|\nabla v(y)|^{p-1}-|\nabla u^*(x)|^{p-1} \notag\\
		\le & \, \frac{1}{\mathcal{H}^{N-1}(\Gamma_{u^*}^t)}\left( \int_{\Gamma_v^t} |\nabla v|^{p-1}\,d\mathcal{H}^{N-1}-\int_{\Gamma_{u^*}^t} |\nabla u^*|^{p-1}\,d\mathcal{H}^{N-1}\right) \notag\\
		\le &\, \frac{\varepsilon^{\frac{N\tau}{N-1}}}{\kappa_N J_u^{\frac{N-1}{N}}(t)}\le \frac{\varepsilon^{\frac{\tau}{N-1}}}{\kappa_N}.\label{pf-eq:D-v-ustar}
	\end{align} 
Here we have used the fact that $u^*$ and $v$ are both radial, and that $J_v(t)\ge J_u(t)$ for any $t\ge0$, since $v\ge u^*$ in $\Omega^*$. Also, from \eqref{pf-eq:levelset-v-ustar} it follows that $|\nabla v(y)|=|\nabla v(l(t)x)|$ where $l(t)=\left[\frac{J_v(t)}{J_u(t)}\right]^{\frac1N}$. In view of \eqref{pf-eq:formula-Dv}, we notice that
\begin{equation}\label{pf-eq:low-Dv}
	|\nabla v(x)|\ge \left(\frac{m_f|x|}{N}\right)^{\frac{1}{p-1}}\quad\forall\,x\in \Omega^*.
\end{equation}
Hence, for $x\in A_1$, we derive from \eqref{pf-eq:D-v-ustar} and \eqref{pf-eq:low-Dv} that
	$$\frac{m_f}{N}|x|\le |\nabla v(l(t)x)|^{p-1} \le |\nabla u^*(x)|^{p-1}+\frac{\varepsilon^{\frac{\tau}{N-1}}}{\kappa_N} \le \delta^{p-1}+\frac{\varepsilon^{\frac{N\tau}{N-1}}}{\kappa_N}.$$
	This implies 
	\begin{equation}\label{pf-eq:est-A1}
		|A_1|\le C_1[\delta^{N(p-1)}+\epsilon^{\frac{N\tau}{N-1}}]
	\end{equation}
	for some constant $C_1=C_1(N, m_f)$.

	 To estimate $|A_2|$, we first estimate the size of the singular set $S$. For this, let $$I_v(t)=\int_{\Lambda^t_v} f(u^*(x))\,dx=\int_0^{J_v(t)} f(u^{\#}(s))\,ds.$$
	By \eqref{com-eq:formula-I}, $I_v(t)\ge I_u(t)$ for any $t\ge 0$. Since $|Z_v\cap\Omega^*|=0$ by \eqref{pf-eq:low-Dv}, we deduce from \eqref{pre-eq:formula-Ju} and \eqref{pf-eq:levelset-v-ustar} that $J'_v(t)=-\frac{\kappa_N J_v^{\frac{N-1}{N}}(t)}{|\nabla v(z)|}$ for $t>0$, where $z\in\Gamma_v^t$. By applying the divergence theorem to equation \eqref{in-eq:pro-v} in $\Lambda_v^t$, we find $$I_v(t)=\kappa_N J_v^{\frac{N-1}{N}}(t) |\nabla v(z)|^{p-1}\quad\text{for }t>0.$$
	Thus, recalling \eqref{est-eq:iso-Holder}, we obtain
		\begin{equation}\label{pf-eq:IJdJ}
			-I_v^{\alpha-1}(t)J_v^{\beta-1}(t)J'_v(t)= \kappa_N^{\alpha}\le 	-I_u^{\alpha-1}(t)J_u^{\beta-1}(t)J'_u(t)
		\end{equation}
	for a.e. $t\in (0,M)$; here, $\alpha=\frac{p}{p-1}$ and $\beta=\frac{p-N}{N(p-1)}$ as in the proof of Lemma \ref{est-lem:positivity-D}. From this, we get
	\begin{equation}\label{pf-eq:dJv-dJu}
		-J'_v(t)\le -J'_u(t)\quad\text{for a.e. } t\in (0,M).
	\end{equation}

	In terms of \eqref{pre-eq:dJu}, \eqref{pf-eq:levelset-v-ustar} and \eqref{pf-eq:dJv-dJu}, we now derive that, for a.e. $t\in (0,M)$,
		\begin{align*}
	\int_{\Gamma_{u^*}^t} |\nabla u^*|^{p-1}\,d\mathcal{H}^{N-1}&=|\nabla u^*(x)|^{p-1}\mathcal{H}^{N-1}(\Gamma_{u^*}^t)\\
	&= (-J'_u(t))^{1-p}[\mathcal{H}^{N-1}(\Gamma_{u^*}^t)]^p\\
	&=(-J'_u(t))^{1-p} \left[\kappa_N J_u^{\frac{N-1}{N}}(t)\right]^p\\
	& \le (-J_v'(t))^{1-p} \left[\kappa_N J_v^{\frac{N-1}{N}}(t)\right]^p\\
	&= (-J_v'(t))^{1-p}[\mathcal{H}^{N-1}(\Gamma_v^t)]^p\\
	&=|\nabla v(y)|^{p-1}\mathcal{H}^{N-1}(\Gamma_v^t)\\
	&=\int_{\Gamma_v^t} |\nabla v|^{p-1}\,d\mathcal{H}^{N-1},
		\end{align*}
	where $x\in\Gamma_{u^*}^t$ and $y\in\Gamma_v^t$. Thus, by \eqref{pf-eq:Dv-Dustar} and the co-area formula (see, for instance, \cite[Proposition 2.1]{BZ}), we obtain
		\begin{align*}
			p M_f|\Omega|\varepsilon &\ge \int_{\mathbb{R}^N} |\nabla v|^p\,dx-\int_{\mathbb{R}^N} |\nabla u^*|^p\,dx\\ & \ge \int_0^M \left(\int_{\Gamma_v^t} |\nabla v|^{p-1}\,d\mathcal{H}^{N-1}-\int_{\Gamma_{u^*}^t} |\nabla u^*|^{p-1}\,d\mathcal{H}^{N-1}\right)dt\\
			& \ge \int_{S}\left(\int_{\Gamma_v^t} |\nabla v|^{p-1}\,d\mathcal{H}^{N-1}-\int_{\Gamma_{u^*}^t} |\nabla u^*|^{p-1}\,d\mathcal{H}^{N-1}\right)dt\\
			& \ge |S|\varepsilon^{\frac{N\tau}{N-1}}.
		\end{align*}
		This yields 
		\begin{equation}\label{pf-eq:est-S}
				|S|\le p M_f|\Omega| \varepsilon^{1-\frac{N\tau}{N-1}}.
		\end{equation}

		With \eqref{pf-eq:est-S}, we are able to further derive estimates for the measure of the sets 
		\begin{equation*}
			\{x\in \Omega^*:v(x)\in (0,t_{\varepsilon,\tau})\setminus S\}\quad \text{and}\quad \{x\in \Omega^*:v(x)\in S\},
		\end{equation*}
		which helps us to conclude an estimate for $|A_2|$. To this aim, consider the set
		$$A_v(\mu):=\{x\in\Omega^*:|\nabla v(x)|\le\mu\},$$ 
		where $\mu>0$. By virtue of  \eqref{pf-eq:low-Dv}, we see
		$$|A_v(\mu)|\le \omega_N\left(\frac{N\mu^{p-1}}{m_f}\right)^N.$$ 
		Then, by the co-area formula and \eqref{pf-eq:est-S}, we can estimate as follows: 
		\begin{align}
			|\{x\in \Omega^*:v(x)\in & S\}|  =\,\int_{S}\int_{\Gamma_v^t}\frac{1}{|\nabla v|}\,d\mathcal{H}^{N-1}\,dt \notag\\
			=\,&\int_{S}\int_{\Gamma_v^t\cap A_v(\mu)}\frac{1}{|\nabla v|}\,d\mathcal{H}^{N-1}\,dt+\int_{S}\int_{\Gamma_v^t\setminus A_v(\mu)}\frac{1}{|\nabla v|}\,d\mathcal{H}^{N-1}\,dt \notag\\
			\le\, & |A_v(\mu)|+\frac{1}{\mu}\int_S \kappa_N J_v^{\frac{N-1}{N}}(t)\,dt \notag\\
			\le\, & |A_v(\mu)|+\frac{\kappa_N|S|}{\mu}|\Omega|^{\frac{N-1}{N}} \notag\\
			\le\, & \omega_N\left(\frac{N\mu^{p-1}}{m_f}\right)^N + \frac{\kappa_N p M_f|\Omega|^{\frac{2N-1}{N}}}{\mu} \varepsilon^{1-\frac{N\tau}{N-1}}. \label{pf-eq:est-v-S}
		\end{align}
		
		
		Since $v-\varepsilon\le u^*$ in $\Omega^*$, we easily get $$J_v(t+\varepsilon)\le J_u(t)\le J_v(t)$$
	for $0\le t\le M$. Moreover, observe from \eqref{pf-eq:IJdJ} that
	\begin{align*}
	J_v(t+\varepsilon)-J_v(t)=\int_t^{t+\varepsilon}J'_v(t)\,dt= &\int_t^{t+\varepsilon}-\kappa_N^\alpha I_v^{1-\alpha}(t)J_v^{1-\beta}(t)\,dt\\
	\ge & \int_t^{t+\varepsilon}-\kappa_N^\alpha (m_f)^{1-\alpha}J_v^{2-\alpha-\beta}(t)\,dt\\
	\ge & -\kappa_N^{\frac{p}{p-1}} (m_f)^{\frac{-1}{p-1}} |\Omega|^{1-\frac{p}{N(p-1)}}\varepsilon.
	\end{align*}
Hence, setting $C_2=\kappa_N^{\frac{p}{p-1}} (m_f)^{\frac{-1}{p-1}} |\Omega|^{1-\frac{p}{N(p-1)}}$, we obtain
\begin{equation}\label{pf-eq:JugeJv}
	J_u(t)\ge J_v(t)-C_2\varepsilon,\quad\text{for } 0\le t\le M.
\end{equation}
Thus, by the co-area formula we find
\begin{align}
	|\{x\in \Omega^*:u^*(x)\in (0,t_{\varepsilon,\tau})\setminus S\}|&= \int_{(0,t_{\varepsilon,\tau})\setminus S} \int_{\Gamma_{u^*}^t}\frac{1}{|\nabla u^*|}\,d\mathcal{H}^{N-1}\,dt \notag\\
	&= -\int_{(0,t_{\varepsilon,\tau})\setminus S}J'_u(t)\,dt \notag\\
	& \ge  -\int_{(0,t_{\varepsilon,\tau})\setminus S}J'_v(t)\,dt \notag\\
	& = \int_{(0,t_{\varepsilon,\tau})\setminus S} \int_{\Gamma_{u^*}^t}\frac{1}{|\nabla v|}\,d\mathcal{H}^{N-1}\,dt \notag\\
	& = |\{x\in \Omega^*:v(x)\in (0,t_{\varepsilon,\tau})\setminus S\}. \label{pf-eq:ustar-S}
\end{align}

Exploiting \eqref{pf-eq:est-v-S}, \eqref{pf-eq:JugeJv} and \eqref{pf-eq:ustar-S}, we deduce that
\begin{align}
	|A_2| & \le |\{x\in \Omega^*:u^*(x)\in (0,t_{\varepsilon,\tau})\cap S\}| \notag \\
	&= |\Omega|-| \Lambda^{t_{\varepsilon,\tau}}_{u^*}|- |\{x\in \Omega^*:u^*(x)\in (0,t_{\varepsilon,\tau})\setminus S\}| \notag \\
	&\le |\Omega|-J_v(t_{\varepsilon,\tau})+ C_2\varepsilon-|\{x\in \Omega^*:v(x)\in (0,t_{\varepsilon,\tau})\setminus S\}| \notag \\
	& = |\{x\in \Omega^*: v(x)\in (0,t_{\varepsilon,\tau})\cap S\}| +C_2\varepsilon \notag \\
	&\le |\{x\in \Omega^*:v(x)\in S\}|+C_2\varepsilon \notag \\
	&\le  \omega_N\left(\frac{N\mu^{p-1}}{m_f}\right)^N + \frac{\kappa_N p M_f|\Omega|^{\frac{2N-1}{N}}}{\mu} \varepsilon^{1-\frac{N\tau}{N-1}}+C_2\varepsilon. \label{pf-eq:est-A2}
\end{align}
Now, take $\mu=\varepsilon^{\frac{1}{2N(p-1)+1}}$, so that $$\mu^{N(p-1)}=\frac{\varepsilon^{1-\frac{N\tau}{N-1}}}{\mu}=\varepsilon^{\frac{N\tau}{N-1}}.$$
With this choice, we combine \eqref{pf-eq:est-A3}, \eqref{pf-eq:est-A1} and \eqref{pf-eq:est-A2} to conclude that
\begin{equation*}
|A| =|A_1|+|A_2|+|A_3| \le C (\delta^{N(p-1)}+\varepsilon^{\frac{N\tau}{N-1}})
\end{equation*}
for some constant $C=C(N,p,|\Omega|,M_f,m_f)$.

This completes the proof by recalling the definition of $\varepsilon$ and of $\tau$. 
		\end{proof}
	
With Lemmas \ref{pf-lem:Du-Dustar} and \ref{pf-lem:mea-Dustar}, we prove Proposition \ref{pf-pro:u-ustar} in the following.

	\begin{proof}[Proof of Proposition \ref{pf-pro:u-ustar}]
		As in the proof of Lemma \ref{pf-lem:mea-Dustar}, we set $\varepsilon:=\|v-u^*\|_{L^\infty(\Omega^*)}$. We shall prove \eqref{pf-eq:L1-u-ustar} when $0<\varepsilon\le 1$, since if $\varepsilon>1$, \eqref{pf-eq:L1-u-ustar} is clearly valid by noting that
		\begin{equation} \label{pf-eq:2L1u}
			\inf_{x_0\in\mathbb{R}^N}\int_{\mathbb{R}^N}|u(x)-u^*(x+x_0)|\,dx
			\le 2\|u\|_{L^1(\mathbb{R}^N)}\le M|\Omega| \le M|\Omega|\varepsilon^{\theta'}
		\end{equation}
 for any $\theta'>0$.

		Notice that $J_u(0)=|\Omega|$. Given $0<r\le\min\{r_1,r_2\}$ with $r_1, r_2$ as in Theorem \ref{pre-thm:PZ}, we conclude from Lemmas \ref{pf-lem:Du-Dustar} and \ref{pf-lem:mea-Dustar} that 
		\begin{gather*}
		(\mathcal{E}_u)^r\le (p M_f |\Omega|)^r \|\nabla u^*\|_{L^p(\mathbb{R}^N)}^{-pr} \varepsilon^r,\\
		\mathcal{M}_{u^*}(\delta)\le C (\delta^{N(p-1)}+\varepsilon^{\frac{N(p-1)}{2N(p-1)+1}}),
		\end{gather*}
		for some constant $C=C(N,p,|\Omega|,M,M_f,m_f)$, where $\mathcal{E}_u$ and $\mathcal{M}_{u^*}(\delta)$ are defined by \eqref{pre-eq:def-E} and \eqref{pre-eq:def-M}, respectively. 
		Then by taking $\delta=\epsilon^\lambda$ for some $\lambda>0$ to be fixed and by applying Theorem \ref{pre-thm:PZ}, we obtain
		\begin{align}
			&\inf_{x_0\in\mathbb{R}^N}\int_{\mathbb{R}^N}|u(x)-u^*(x+x_0)|\,dx \notag\\
			\lesssim & \left(\varepsilon^{\lambda N(p-1)}+\varepsilon^{\frac{N(p-1)}{2N(p-1)+1}}\right)^{r_3}+ \|\nabla u^*\|_{L^p(\mathbb{R}^N)}^{1-prr_3}\,\varepsilon^{rr_3}+ \|\nabla u^*\|_{L^p(\mathbb{R}^N)}^{1-prr_3+r_3}\,\varepsilon^{r_3(r-\lambda)}, \label{pf-eq:L1-u-ustar-pre}
		\end{align}
	where $r_3$ is as in Theorem \ref{pre-thm:PZ}. Note that 
	\begin{equation}\label{pf-eq:est-Dustar}
		 \int_{\mathbb{R}^N} |\nabla u^*|^p\,dx\le \int_{\mathbb{R}^N} |\nabla u|^p\,dx=\int_{\Omega}uf(u)\,dx\le MM_f|\Omega|,
	\end{equation}
	which follows by the P\'olya--Szeg\"o principle and by testing equation \eqref{in-eq:pro} with $u$. Accordingly, fixing constants $\lambda$ and $r$ such that $\lambda< r <\min\{r_1,r_2, \frac{1}{pr_3}\}$, we deduce from \eqref{pf-eq:L1-u-ustar-pre} that \eqref{pf-eq:L1-u-ustar} holds with $$\theta=\min\left\{\lambda N(p-1) r_3,\, \frac{N(p-1)r_3}{2N(p-1)+1},\, rr_3,\, r_3(r-\lambda)\right\}.$$
	
	The proof is complete.
	\end{proof}
	
Now, we are in the position to present the proof of Theorem \ref{thm:main}.
	
	\begin{proof}[Proof of Theorem \ref{thm:main}]
		If $\delta_\Omega\le 1$, then the estimate \eqref{in-eq:u-ustar} is a direct consequence of combining together Proposition \ref{est-pro:D-iso}, Theorem \ref{com-thm:Talenti} and Proposition \ref{pf-pro:u-ustar}. 
		
		Otherwise, when $\delta_\Omega> 1$, \eqref{in-eq:u-ustar} is clearly valid, as it is easy to show by arguing as in \eqref{pf-eq:2L1u}.
		
		The proof is complete.
	\end{proof}

	\medskip

	\subsection*{Acknowledgements}
	The authors have been supported by the Research Project of the Italian Ministry of University and Research (MUR) Prin 2022 ``Partial differential equations and related geometric-functional inequalities'', grant number 20229M52AS\_004. The first author has been also partially supported by the ``Gruppo Nazionale per l'Analisi Matematica, la Probabilit\`a e le loro Applicazioni'' (GNAMPA) of the ``Istituto Nazionale di Alta Matematica'' (INdAM, Italy).


\begin{thebibliography}{1}
		\bibitem{ALT86}
		A. Alvino, P.L. Lions, G. Trombetti, A remark on comparison results via symmetrization, Proc. Roy. Soc. Edinburgh Sect. A 102 (1986), 37--48. 
		\bibitem{ABMP23}
		V. Amato, R. Barbato, A.L. Masiello, G. Paoli, The Talenti comparison result in a quantitative form, arXiv:2311.18617.
		\bibitem{ACF2021}
		C.A. Antonini, G. Ciraolo, A. Farina, Interior regularity results for inhomogeneous anisotropic quasilinear equations, Math. Ann. 387 (2023), 1745--1776.   
		\bibitem{Bro97}
		F. Brock, Radial symmetry for nonnegative solutions of semilinear elliptic problems involving the p-Laplacian, Progress in
		PDE 1 (1997), 46--57.
		\bibitem{Bro00}
		F. Brock, Continuous rearrangement and symmetry of solutions of elliptic problems, Proc. Indian Acad. Sci. Math. Sci. 110 (2000), 157--204.
		\bibitem{BZ}
		J. Brothers, W. Ziemer, Minimal rearrangements of Sobolev functions, J. reine. angew. Math. 384 (1988), 153--179.
		
		\bibitem{CEFT08}
		A. Cianchi, L. Esposito, N. Fusco, C. Trombetti, A quantitative P\'olya--Szeg\"o principle, J. Reine Angew. Math. 614 (2008), 153--189.
		\bibitem{CFNT17}
		A. Cianchi, V. Ferone, C. Nitsch, C. Trombetti, Balls minimize trace constants in BV, J. Reine Angew. Math. 725 (2017), 41--61.
		\bibitem{CCPP24}
		G. Ciraolo, M. Cozzi, M. Perugini, L. Pollastro, A quantitative version of the Gidas-Ni-Nirenberg Theorem, J. Funct. Anal. 287 (2024), no. 9, 110585.
		\bibitem{CL24}
		G. Ciraolo, X. Li, Classification of solutions to the anisotropic $N$-Liouville equation in $\mathbb{R}^N$, Int. Math. Res. Not. IMRN 19 (2024), 12824-12856.
		\bibitem{CL22}
		G. Ciraolo, X. Li, An exterior overdetermined problem for Finsler $N$-Laplacian in convex cones, Calc. Var. Partial Differential Equations 61 (2022), Paper No. 121, 27 pp. 
		\bibitem{DP98}
		L. Damascelli, F. Pacella, Monotonicity and symmetry of solutions of $p$-Laplace equations, $1<p<2$, via the moving plane method, Ann. Scuola Norm. Sup. Pisa Cl. Sci. 26 (1998), 689--707.
		
		\bibitem{DP00}
		L. Damascelli, F. Pacella, Monotonicity and symmetry results for $p$-Laplace equations and applications, Adv. Differential Equations 5 (2000), 1179--1200.
		
		\bibitem{DS04}
		L. Damascelli, B. Sciunzi, Regularity, monotonicity and symmetry of positive solutions of $m$-Laplace equations, J. Differential
		Equations 206 (2004), 483--515.
		\bibitem{DSPV24}
		S. Dipierro, J.G. Silva, G. Poggesi, E. Valdinoci, A quantitative Gidas-Ni-Nirenberg-type result for the p-Laplacian via integral identities, arXiv:2408.03522.
		
		\bibitem{DFM05}
		J. Dolbeault, P. Felmer, R. Monneau, Symmetry and nonuniformly elliptic operators, Differential Integral Equations 18 (2005),
		141--154.
		
		\bibitem{FI13}
		A. Figalli, E. Indrei, A sharp stability result for the relative isoperimetric inequality inside convex cones, J. Geom. Anal. 23 (2013), 938--969.
		
		\bibitem{FMP2010}
		A. Figalli, F. Maggi, A. Pratelli, A mass transportation approach to quantitative isoperimetric inequalities, Invent. Math. 182 (2010), 167--211. 
		\bibitem{FMP08}
		N. Fusco, F. Maggi, A. Pratelli, The sharp quantitative isoperimetric inequality, Ann. of Math. 168 (2008), 941--980.
		\bibitem{GNN79}
		B. Gidas, W.-M. Ni, L. Nirenberg, Symmetry and related properties via the maximum principle, Comm. Math. Phys. 68 (1979), 209--243.
		\bibitem{GV89}
		M. Guedda, L. Veron, Quasilinear elliptic equation involving critical
		Sobolev exponents, Nonlinear. Anal. 13 (1989), 879--902.
		\bibitem{Kes06}
		S. Kesavan, Symmetrization and applications, Series in Analysis, 3. World Scientific Publishing Co. Pte. Ltd., Hackensack, NJ, 2006.
		\bibitem{KP94}
		S. Kesavan, F. Pacella, Symmetry of positive solutions of a quasilinear elliptic equation via isoperimetric inequalities, Appl.
		Anal. 54 (1994), 27--35.
		
		\bibitem{Lions81}
		P.L. Lions, Two geometrical properties of solutions of semilinear problems, Appl. Anal. 12 (1981), 267--272.
		\bibitem{Lou08}
		H. Lou, On singular sets of local solutions to $p$-Laplace equations, Chin. Ann. Math. Series 29B (2008), 521--530.
		\bibitem{Mayza11}
		V.G. Maz’ya, Sobolev spaces with applications to elliptic partial differential equations, Springer-Verlag, Heidelberg 2011.
		\bibitem{PS51}
		G. P\'olya, G. Szeg\"o, Isoperimetric inequalities in Mathematical Physics, Ann. Math. Stud. 27, Princeton University Press, Princeton 1951.
		\bibitem{Ros94}
		E. Rosset, An approximate Gidas-Ni-Nirenberg theorem, Math. Methods Appl. Sci. 17 (1994), 1045--1052.
		
		\bibitem{Ser13}
		J. Serra, Radial symmetry of solutions to diffusion equation with discontinuous nonlinearities, J. Differential Equations 254 (2013), 1893--1902.
		\bibitem{Talenti76}
		G. Talenti, Elliptic equations and rearrangements, Ann. Scuola Norm. Sup. Pisa Cl. Sci. (4) 1976, 697--718.
		\bibitem{Talenti79}
		G. Talenti, Nonlinear elliptic equations, rearrangements of functions and Orlicz spaces, Ann. Mat. Pura Appl. (120) 1979, 160--184. 
	\end{thebibliography}
\end{document}